\documentclass[10 pt]{amsart}

\usepackage{amsmath} 
\usepackage{amsfonts}
\usepackage{amssymb}
\usepackage{amstext}
\usepackage{amsbsy}
\usepackage{amsopn}
\usepackage{amsthm}
\usepackage{amsxtra}
\usepackage{graphicx}
\usepackage{caption}
\usepackage{subcaption}
\usepackage{color}
\usepackage{hyperref}
\usepackage{enumerate}
\usepackage{amscd}
\usepackage{tikz}

\newtheorem{theorem}{Theorem}[section]
\newtheorem{lemma}[theorem]{Lemma}

\newtheorem{proposition}[theorem]{Proposition}

\newtheorem*{conjecture*}{Conjecture}
\newtheorem*{corollary*}{Corollary}
\newtheorem*{claim*}{Claim}
\newtheorem*{theorem*}{Theorem}

\theoremstyle{remark}

\theoremstyle{definition}

\newcommand{\one}{\mathbf{1}}

\newcommand{\A}{\mathcal{A}}

\newcommand{\Z}{\mathbb{Z}}

\newcommand{\N}{\mathbb{N}}

\newcommand{\CA}{\mathcal{A}}

\newcommand{\CL}{\mathcal{L}}
\newcommand{\CW}{\mathcal{W}}


\newcounter{note}


\title{Realizing ergodic properties in zero entropy subshifts}
\author{Van Cyr}
\address{Bucknell University, Lewisburg, PA 17837 USA}
\email{van.cyr@bucknell.edu}
\author{Bryna Kra}
\address{Northwestern University, Evanston, IL 60208 USA}
\email{kra@math.northwestern.edu}

\subjclass[2010]{}
\keywords{}

\subjclass[2010]{37B10 (primary), 37A05, 37A35}
\keywords{subshift, block complexity, slow entropy}

\thanks{The second author was partially supported by NSF grant 1800544.}

\begin{document}

\begin{abstract}

A subshift with linear block complexity has at most countably many ergodic measures, 
and we continue of the study of the relation between such complexity and the invariant measures. 
By constructing minimal subshifts whose block complexity is arbitrarily close to linear but has
uncountably many ergodic measures, 
we show that this behavior fails as soon as the block complexity is superlinear.  
With a different construction, we show that there exists a minimal subshift 
with an ergodic measure whose slow entropy grows slower than any given rate tending to 
infinitely but faster than any other rate majorizing this one yet still growing subexponentially.  
These constructions lead to obstructions in using subshifts in applications to properties of the prime numbers and in finding a measurable version of the complexity gap that arises for shifts of sublinear complexity. 
\end{abstract}

\maketitle

\section{Introduction}
Assume that $(X, \sigma)$ is a subshift over the finite alphabet $\A$, meaning that $X\subset \A^\Z$ is a closed set 
that is invariant under the left shift $\sigma\colon \CA^\Z\to\CA^\Z$.  
The block complexity $p_X(n)$ of the shift is defined to be the number of words of length $n$ which occur 
in any $x\in X$.  
Boshernitzan~\cite{B} showed that a minimal subshift with linear block complexity has only finitely many ergodic measures, 
where the number depends on the complexity growth.  
In~\cite{CK}, we showed that any subshift (minimal or not) with linear block complexity has at most finitely many 
nonatomic ergodic measures, and so at most countably many ergodic measures (with no requirement that the measures are nonatomic).  
In the same article, we give examples of subshifts with block complexity arbitrarily close to linear which have uncountably many nonatomic 
ergodic measures.  Our main result is to show 
there is no complexity bound beyond linear on a subshift that suffices for guaranteeing there are at most countably many ergodic measures.
More precisely, we show as soon as the growth is superlinear, we can have the maximal number of ergodic measures:
\begin{theorem}
\label{thm:main} 
If $(p_n)_{n\in\N}$ is a sequence of natural numbers such that 
$$ 
\liminf_{n\to\infty}\frac{p_n}{n}=\infty, 
$$ 
then there exists a minimal subshift $X$ which supports uncountably many ergodic measures and is such that 
$$ 
\liminf_{n\to\infty}\frac{P_X(n)}{p_n}=0. 
$$ 
\end{theorem}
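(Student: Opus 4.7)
The plan is to build $X$ by a hierarchical (cutting-and-stacking / substitution-like) construction at a sparse sequence of scales. Since $\liminf_{n\to\infty} p_n/n = \infty$, one can choose a rapidly increasing sequence $L_1 < L_2 < \cdots$ of positive integers so that $p_{L_k}/L_k$ grows faster than any prescribed function of $k$ (in particular, faster than $|\mathcal{W}_k|^2$ where $\mathcal{W}_k$ is the finite family of ``level $k$ blocks'' defined below). The subshift $X$ will be the orbit closure of any bi-infinite sequence obtainable as a limit of concatenations of these blocks.

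At stage $k$ I would define a finite set $\mathcal{W}_k$ of words of common length $L_k$, each word in $\mathcal{W}_{k+1}$ being a concatenation of words from $\mathcal{W}_k$. Two mechanisms must be built into the recursion. First, to guarantee minimality of the resulting subshift, every word in $\mathcal{W}_{k+1}$ must contain every word in $\mathcal{W}_k$ as a factor; this is arranged by inserting a prescribed exhaustive sub-concatenation at the start of each level-$(k+1)$ word before adding the ``free'' part. Second, to produce uncountably many ergodic measures, at each level the ``free'' part is assembled according to one of at least two distinct \emph{schemes} which differ in the relative frequencies with which the various words of $\mathcal{W}_k$ appear. A sequence of scheme choices $\omega \in \{0,1\}^{\N}$ then determines a sub-subshift $X_\omega \subset X$, and the whole subshift $X$ is obtained as $\overline{\bigcup_\omega X_\omega}$.

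The complexity bound is straightforward: every word of length $L_k$ appearing in $X$ is a subword of the concatenation of at most two consecutive elements of $\mathcal{W}_k$, so
\[
P_X(L_k) \leq |\mathcal{W}_k|^2 \cdot L_k.
\]
By choosing the $L_k$'s late enough so that $p_{L_k} \geq k \cdot |\mathcal{W}_k|^2 \cdot L_k$, we get $P_X(L_k)/p_{L_k} \to 0$, hence $\liminf_n P_X(n)/p_n = 0$. To extract uncountably many ergodic measures I would show that any ergodic measure on $X$ is a weak-$*$ limit of empirical measures along blocks, so it assigns to each $w \in \mathcal{W}_k$ a well-defined frequency; by construction, distinct itineraries $\omega$ force incompatible frequency profiles on some $\mathcal{W}_k$, yielding at least one (hence uncountably many) distinct ergodic measures on the corresponding $X_\omega$'s via ergodic decomposition.

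The main obstacle is reconciling these three demands. The minimality mechanism forces a growing ``overhead'' in each level-$(k+1)$ word, which threatens to wash out the frequency differences between schemes and collapse the uncountable family of ergodic measures to a single one. The delicate step will therefore be choosing the relative sizes of the exhaustive prefix, the length $L_{k+1}/L_k$, and the scheme proportions so that (i) the frequency gaps between schemes at level $k$ survive into a definite frequency gap as $k \to \infty$, while simultaneously (ii) $|\mathcal{W}_k|$ grows slowly enough (ideally bounded, say two blocks per level) that the complexity estimate above remains far below $p_{L_k}$ along the chosen subsequence.
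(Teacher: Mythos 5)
Your high-level plan — a hierarchical construction of word families $\mathcal{W}_k$ at sparse scales $L_k$, with an exhaustive prefix to force minimality and a binary choice of "schemes" at each level to encode uncountably many statistics — is in fact the same skeleton used in the paper. The complexity estimate $P_X(L_k)\le |\mathcal{W}_k|^2\cdot L_k$ is also essentially correct (the paper proves $P_{X_{k+1}}(n_{k+1})\le(\binom{2^{k+1}}{2}+1)n_{k+1}$ where $|\mathcal{W}_{k+1}|=2^{k+1}$, and bounds it against $p_{n_{k+1}}$ by choosing the scales late).

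The genuine gap is in how you try to produce uncountably many ergodic measures. You write that each scheme sequence $\omega\in\{0,1\}^{\N}$ determines a sub-subshift $X_\omega\subset X$ and that $X=\overline{\bigcup_\omega X_\omega}$. If the $X_\omega$ were proper nonempty closed invariant subsets of $X$, then $X$ would not be minimal — minimality means the only closed invariant subsets are $\emptyset$ and $X$ itself. And if every $X_\omega$ equals $X$, the partition by itinerary is vacuous and does not yet produce distinct measures. You have set up the uncountable family as if it lived on disjoint closed invariant pieces, but on a minimal subshift every invariant measure has full support; the measures must be distinguished by their \emph{frequency statistics}, not by the closed set they sit on. This is not a technicality: it is precisely the tension you flag at the end (the minimality overhead threatening to wash out the frequency gaps), and your framework does not resolve it — it instead sidesteps it in a way that is internally inconsistent.

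What the paper does instead, and what your proposal is missing, is the following. All scheme choices produce words living in the same nested chain $X_1\supseteq X_2\supseteq\cdots$, and the final $X$ is the orbit closure of a single point determined by the nested prefixes. Minimality follows because every level-$k$ word occurs syndetically in every point of $X$. The uncountable family of ergodic measures is then extracted by a weak-$*$ limit argument: for each binary itinerary $(a_t)$ one takes a sequence of ergodic measures $\nu_j$ each giving mass at least $\Delta=\prod\delta_i>9/10$ to a cylinder set encoding the choice $a_t$ for $t\le j$; a weak-$*$ subsequential limit $\nu$ gives mass at least $\Delta$ to \emph{each} of these cylinder sets simultaneously, and one shows $\nu(A_\nu)\ge 8/10$ for the intersection $A_\nu=\bigcap_t\mathcal{S}_t$. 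Distinct itineraries force incompatible cylinder masses, hence distinct limit measures, and one passes to an ergodic component to conclude. The quantitative control — choosing $N_k,S_k$ so that the frequency of a self-concatenated block in the next level is at least $\delta_k$ with $\prod\delta_k$ bounded away from $0$ — is exactly what keeps the "overhead" from collapsing the measures, and is the step your plan leaves unaddressed. You also hope to keep $|\mathcal{W}_k|$ bounded (two blocks per level), whereas the paper takes $|\mathcal{W}_k|=2^k$; the doubling is what lets each level-$k$ block carry its own periodized-vs.-alternated pair of successors, which is how the binary choice is implemented without breaking minimality. Keeping $|\mathcal{W}_k|$ fixed at two would require a different implementation of the branching that you have not described.
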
 

The distinction between countably and uncountably many ergodic measures supported by a subshift has recently received attention, as it plays a role in the deep results of Frantzikinakis and Host~\cite{FH} on the complexity of the Liouville shift. 
More precisely, by studying the subshift naturally associated to Liouville function $\lambda(n)$ (see Section~\ref{subshift-language})  and the number of ergodic measures it supports, they conclude that the Liouville function has superlinear complexity.  
Given the example we construct in Theorem~\ref{thm:main}, any approach to showing that the Liouville function has higher growth rates must rely on further properties of the shift beyond the cardinality of the set ergodic measures supported by subshifts whose complexity grows at a given rate. 
In a further development, using different methods McNamara~\cite{M} has shown that the Liouville function has at least quadratic complexity.

Katok and Thouvenot~\cite{KT} and Ferenczi~\cite{Fer} defined the slow entropy, as a measure theoretic invariant to capture 
a measurable version of the  (topological) complexity.  They defined two growth rates, $P_{T}^-(n)$
and $P_{T}^+(n)$ of a measure preserving system $(X, \mu, T)$ and whether each these growth rates is slower $\prec$ or faster $\succ$ than a given growth rate is a measure theoretic invariant for the system and computable using a generating partition (see Section~\ref{sec:measure-theoretic} for precise definitions).  
Using a different construction, we exhibit the freedom on growth rate of the slow entropy,  with a minimal subshift of zero topological entropy such that the slow entropy grows slower than some (arbitrarily slowly growing) given sequence while faster than another (arbitrarily quickly within the class of subexponentially growing) given sequence: 
\begin{theorem}\label{thm:slow-entropy} 
Assume $(a_n)_{n\in\N}$ and $(b_n)_{n\in\N}$ are two non-decreasing sequences of positive integers such that $\displaystyle{\lim_{n\to\infty} a_n=\infty}$, $\displaystyle{\lim_{n\to\infty}\frac{1}{n}\cdot\log(b_n)=0}$, and $a_n\leq b_n$ for all $n\in\N$.  There exists a minimal subshift $(X_{\infty},\sigma)$ of topological entropy zero and an ergodic measure $\mu$ supported on $X_{\infty}$ such that 
$$ 
P_{\sigma}^-(n)\prec(a_n) \text{ and } P_{\sigma}^+(n)\succ(b_n). 
$$ 
\end{theorem}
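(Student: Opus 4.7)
The approach is to build $X_\infty$ as the inverse limit of a hierarchical substitution-style construction, together with an ergodic measure $\mu$ whose slow entropy oscillates between the two prescribed rates. At each stage $k$ I would carry a finite collection $W_k$ of words of a common length $\ell_k$, together with a rule defining each $w \in W_{k+1}$ as a concatenation of elements of $W_k$ along a fixed backbone with $s_k$ designated ``free slots'' into which arbitrary elements of $W_k$ can be inserted. Insisting that every element of $W_k$ appears in the backbone of each $w \in W_{k+1}$ gives minimality of $X_\infty$, and the uniform product measure on the free choices at each level pushes forward to a shift-invariant ergodic measure $\mu$ on $X_\infty$ by the standard Bratteli--Vershik / Chacon argument.

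The first step is to relate slow entropy to the construction parameters. Using the generating partition $\mathcal P$ of $X_\infty$ by the letter at coordinate $0$, the $\bar d_n$--distance coincides with the Hamming metric on length-$n$ factors. For $n$ near $\ell_k$, a typical length-$n$ factor of a $\mu$-random point agrees with some element of $W_k$ up to a boundary error of size $O(\ell_{k-1})$; if the sequence $\ell_k$ is chosen to grow quickly enough that this error is $\varepsilon n$-small, then $|W_k|$ Hamming balls cover a $(1-\varepsilon)$-fraction of $\mu$, giving $P^-_\sigma(\ell_k) \lesssim |W_k|$. In the other direction, for $n$ near $\ell_{k+1}$, distinct free-slot assignments at level $k$ yield length-$n$ factors differing on a definite fraction of positions, producing essentially disjoint Hamming balls and the lower bound $P^+_\sigma(n) \gtrsim |W_k|^{s_k}$.

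With these two estimates, the second step is to tune the parameters inductively. At stage $k$ I would first pick $\ell_k$ so large that any $n \in [\ell_{k-1},\ell_k]$ is handled by the prior estimates, then choose $|W_k|$ to lie well below $a_{\ell_k}$; this is possible because the hypothesis $a_n \to \infty$ lets $|W_k|$ grow without bound while still enforcing $P^-_\sigma(n)/a_n \to 0$ along the chosen subsequence, and by monotonicity in between. Next I would pick $s_k$ large and place $\ell_{k+1} = (s_k + t_k)\ell_k$ so that $|W_k|^{s_k}$ comfortably exceeds $b_{\ell_{k+1}}$. The hypothesis $\log(b_n)/n \to 0$ ensures that $b_n$ is subexponential and hence reachable without forcing $|W_{k+1}|$ to grow exponentially in $\ell_{k+1}$, which in particular preserves zero topological entropy of $X_\infty$.

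The main obstacle is the measure-theoretic control at the ``sparse'' scales $n \approx \ell_k$. Since $P^-_\sigma$ is defined via a supremum over partitions refining a generating one, one must verify that the $|W_k|$-element cover constructed above is genuinely near-optimal for $\mu$, not merely a topological upper bound. This reduces to a quantitative statement that the law of a $\mu$-random length-$n$ factor concentrates in Hamming distance on the finite set $W_k$; the delicate point is to bound the ``interface error'' coming from substrings that straddle two consecutive level-$k$ blocks inside a level-$(k+1)$ word, which forces carefully tuned growth of $\ell_k$ relative to $s_k$, $a_n$, and $b_n$ simultaneously. Once this quantitative concentration is established, the oscillation built into the inductive tuning delivers $P^-_\sigma(n) \prec (a_n)$ and $P^+_\sigma(n) \succ (b_n)$ at once, and unique ergodicity (or at least ergodicity of $\mu$) follows from the standard Bratteli--Vershik framework.
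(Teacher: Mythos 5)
Your overall architecture—hierarchical levels with scale-dependent control, large variety at some scales for the $P^+_\sigma$ lower bound and a small Hamming-ball cover at others for the $P^-_\sigma$ upper bound—is the right general strategy and broadly resembles the paper's. You also correctly flag the ``interface error'' from windows that straddle two consecutive level-$k$ blocks as the central difficulty. But you underestimate it: it cannot be absorbed by careful tuning of the $\ell_k$, and the claimed bound $P^-_\sigma(\ell_k)\lesssim|W_k|$ fails in the ``backbone with free slots'' framework as written. A window of length $n\approx\ell_k$ dropped at a $\mu$-random position straddles two consecutive level-$k$ blocks at an offset roughly uniform over $\{0,\dots,\ell_k-1\}$. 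Because your backbone must contain every element of $W_k$ (for minimality) and the free slots carry arbitrary elements, the pair $(w_j,w_{j+1})$ surrounding a generic position is far from constant; for offsets near $\ell_k/2$ such a window is at Hamming distance about $1/2$ from \emph{every} element of $W_k$, and distinct pairs give pairwise distant windows. The set needing covering at scale $\ell_k$ thus has cardinality on the order of $\ell_k$, not $|W_k|$, and the boundary error is generically $\Theta(\ell_k)$, not $O(\ell_{k-1})$. Since $(a_n)$ may grow arbitrarily slowly, a cover whose size grows with the scale cannot be beaten by $a_{\ell_k}$.

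The missing ingredient, supplied in the paper, is an explicit ``quiet phase'' between successive loud phases: each level-$(i+1)$ word $w^{i+1}_j$ is replaced by its $M_{i+1}$-fold self-concatenation $v^{i+1}_j=(w^{i+1}_j)^{M_{i+1}}$ before being used as a building block. Inside a long periodic block, a window of length $P$ with $N_{i+1}\le P\ll N_{i+1}M_{i+1}$ sees only cyclic shifts of $w^{i+1}_j$; straddling of two different $v$'s is pushed to measure $O\bigl(P/(N_{i+1}M_{i+1})\bigr)$ by an ergodic argument (Lemma~\ref{lem:quiet}), so the cover at scale $P$ has the \emph{fixed} size $k_{i+1}|w^{i+1}_1|$, independent of $P$ and $M_{i+1}$. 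Because that size is frozen, one can then pick $P_{i+1}$ so large that $a_{P_{i+1}}$ exceeds it, using only $a_n\to\infty$—this is exactly the tuning step your construction cannot perform. You will need to insert such a periodic-repetition stage (or some device making consecutive level-$k$ blocks coincide with high probability) for the $P^-_\sigma$ bound to go through. Your loud-phase sketch for the $P^+_\sigma$ lower bound—Hamming-separated words producing essentially disjoint balls—is in line with the paper, but making the measure estimates work requires additional combinatorial input (balanced letter frequencies and a no-shared-subwords-in-squares condition, as in Proposition~\ref{prop:stats}, together with the pigeonhole Lemma~\ref{lem:combinatorics}); and the paper obtains minimality and unique ergodicity by passing to a Jewett--Krieger model at the end rather than by a Bratteli--Vershik argument, which sidesteps having to verify those properties directly for $X_\infty$.
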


If we only consider the upper growth rate $P_T^+(n)$, related constructions are given in Katok and Thouvenot~\cite{KT} and Serafin~\cite{S} of a subshift with zero topological entropy and $P_T^+(n)$ growing arbitrarily quickly (but still subexponetially growing).  
The restrictions given in our theorem on the sequences $(a_n)_{n\in\N}$ and $(b_n)_{n\in\N}$ are the weakest for which a result like ours could hold, in a precise sense that we explain (see Section~\ref{sec:measure-theoretic}).
By a theorem of Ferenczi~\cite{Fer}, it follows from the lower bound on the growth  
$P_{\sigma}^+(n)\succ(b_n)$, that the system we construct is not a Kronecker system. 
From the upper bound $P_{\sigma}^-(n)\prec(a_n)$, 
it follows that there is no sequence $(a_n)$ increasing to infinity 
and such that the analogous bound would give 
$P_{\sigma}^-(n)\prec(c_n)$ for all sequences $(c_n)$ 
increasing to infinity.  
This exhibits a different behavior than what happens in the topological setting.  
Namely, the Morse-Hedlund Theorem states that if there exists some $n\in\N$ such that 
$P_X(n)\leq n$ for some system system $(X, \sigma)$, then we have that the topological complexity function $P_X(n)$
is actually bounded for all $n\in\N$.  
Our construction shows that the measurable analog of the result fails. 
Since $P_{\sigma}^+(n)\succ\{b_n\}$, by an approximation argument we have that the topological complexity 
$P_X(n) \geq b_n$ for infinitely many $n$.  
In particular, there is no subexponentially growing sequence $(b_n)_{n\in\N}$ 
such that  any  subshift whose word complexity exceeds $b_n$ for infinitely many $n$ 
must have positive entropy.  
This can be viewed as a counterexample to a dual version 
of the  Morse-Hedlund Theorem, 
meaning there exists no subexponentially growing sequence that asymptotically bounds the complexity of every zero entropy subshift.

\section{Background and notation}

\subsection{Symbolic Dynamics} 
Assume that $\A$ is a finite set endowed with the discrete topology, and we call $\A$ the {\em alphabet}.   The 
space $\A^{\Z}$, endowed with the product topology, is a compact, metrizable space.  
An element $x\in \A^{\Z}$ denotes a bi-infinite sequence in the alphabet $\A$, 
meaning that $x=(x_i)_{i\in\Z}$ with each $x_i\in\A$. 
The left shift $\sigma\colon\A^{\Z}\to\A^{\Z}$ defined by $(\sigma x)_i:=x_{i+1}$ is continuous and the dynamical system $(\A^{\Z},\sigma)$ is the {\em full $\A$-shift}.  A {\em subshift} $X\in\A^{\Z}$ is the restriction of $\sigma$ to any closed, $\sigma$-invariant set $X$. 

\subsection{Words and complexity} If $w=(w_0,\dots,w_{n-1})\in\A^n$, 
the {\em cylinder set} $[w]$ in $\A^{\Z}$ determined by $w$ is defined to be
$$ 
[w]:=\{x\in\A^{\Z}\colon x_i=w_i\text{ for all }0\leq i\leq n-1\}. 
$$ 
If $X\subseteq\A^{\Z}$ is a subshift, then the {\em language $\mathcal{L}(X)$ of $X$} is the set of all words $w\in\A^*$ such that $[w]\cap X\neq\emptyset$.  For $n\in\N$, the set $\mathcal{L}_n(X)$ of {\em words of length $n$} in $X$ is the set 
$$ 
\mathcal{L}_n(X):=\{w\in\A^n\colon[w]\cap X\neq\emptyset\}. 
$$ 
The {\em block complexity} (also known as the {\em word complexity function}) $p_X\colon\N\to\N$ of $X$ is the function that counts the number of words of each length in $X$.  Thus 
$$ 
p_X(n):=|\mathcal{L}_n(X)|. 
$$ 

\subsection{The natural subshift associated to a language}
Given a subshift $(X, \sigma$), it's language $\CL(X)$ satisfies the properties that: 
\begin{enumerate}
\item
\label{it:one} If $w\in \CL(X)$, then every subword of $w$ also belongs to $\CL(X)$.
\item 
\label{it:two}
If $w\in\CL(X)$, then there exist nonempty words $u,v\in \CL(X)$ such that $uwv\in\CL(X)$.  
\end{enumerate}
Conversely, if $\CL$ is any collection of words over an alphabet $\CA$ satisfying condition~\eqref{it:one}, then $\CL = \CL(X)$ for some subshift $(X, \sigma)$. Thus the language of a subshift determines the subshift, and we can define a subshift by specifying its language.  When we do so, we say that $(X, \sigma)$ is the subshift defined by the language $\CL$.

Moreover, given a finite list of words $v_1, \ldots, v_k$, we can consider the collection of all  bi-infinite concatenations of these words, 
and by taking the shifts of these bi-infinite sequences, we obtain a subshift $(X, \sigma)$.  We refer to this as the
 {\em subshift defined by the words $v_1, \ldots, v_k$}.

\subsection{The natural subshift associated to a sequence}\label{subshift-language}  
Suppose $h\colon\N\to\A$ is a function.  Fix some $a\in\A$ and define 
$$ 
y:=\begin{cases}
y_i=h(i) & \text{ if } i>0; \\ 
y_i=a & \text{ if } i\leq 0. 
\end{cases}
$$ 
Then the set 
$$ 
Y_h:=\overline{\{\sigma^ny\colon n\in\Z\}} 
$$ 
is a subshift.  Note that $Y_h$ contains at most $n$ words of length $n$ that are not found in contiguous blocks of the function $h\colon\N\to\A$.  Therefore number of such words differs from $P_{Y_h}(n)$ by at most $n$.  A word $w\in \mathcal{L}(Y)$ if and only if there are arbitrarily large $m\in\N$ such that $w_i=h(m+i)$ for all $0\leq i<|w|$.  The resulting shift $(Y_h, \sigma)$ is {\em transitive}, meaning that it has a point with dense orbit under the shift (note that the point $y$ has dense orbit). 

Of particular interest are functions $h$ that arise in number theory.  
For example, we can consider $h$ to be the Liouville function $\lambda(n)$, 
the sequence in the alphabet $\{-1, 1\}$ with $\lambda(n) = 1$ if $n$ has an even number of prime factors counted with multiplicity and otherwise is $-1$, or  take $h$ to be the M\"obius function $\mu(n)$, the sequence in the alphabet $\{-1, 0, 1\}$ with 
$\mu(n)=1$ if $n$ is square free and has an even number of prime factors, $\mu(n) = -1$  if $n$ is square free and has an odd number of prime factors, and otherwise $\mu(n) = 0$.  
Then studying the language of $Y_h$ gives insight into the number theoretical properties of $h$. 

\subsection{Invariant measures on certain subshifts} 
A Borel measure $\nu$ supported in $X$ is {\em invariant} if $\nu(A)=\nu(\sigma^{-1}A)$ for all Borel sets $A\subseteq X$ 
and is {\em ergodic} if $\nu(A)\cdot\nu(X\setminus A)=0$ whenever $A=\sigma^{-1}A$.  
It was recently shown~\cite{FH} that if $(Y,T)$ is a topological dynamical system of entropy zero and if $Y$ supports at most countably many ergodic measures, then $(Y,T)$ satisfies a logarithmic variant of the M\"obius Disjointness Conjecture of Sarnak.  Namely, under these conditions, for every $y\in Y$ and every $f\in C(Y)$ we have 
$$ 
\lim_{N\to\infty}\frac{1}{\log N}\sum_{n=1}^N\frac{f(T^ny)\mu(n)}{n}=0  
$$ 
where $\mu$ is the M\"obius function (and the same conclusion holds if $\mu$ is replaced with the Liouville function $\lambda$).  This result was leveraged in~\cite{FH} to give a lower bound on the growth of words 
in the language of the {\em Liouville shift}: $Y_{\lambda}$, 
showing that the Liouville shift has superlinear block growth (note again that the question of whether $P_{Y_{\lambda}}(n)$ grows superlinearly is equivalent to the question of whether the number of words of length $n$ that occur in the Liouville sequence grows superlinearly).  Along with the deep result on logarithmic disjointness that they prove, they rely on a bound on the number of ergodic measures that can be supported by a subshift with linear growth.  

As a corollary of our result in Theorem~\ref{thm:main}, any proof showing that the Liouville shift has a growth rate that is faster than some explicit superlinear bound necessarily relies on deeper information from number theory, rather than only on estimates on the number of ergodic measures. 
\section{Proof of Theorem~\ref{thm:main}} 
Suppose $(p_n)$ is a sequence of natural numbers satisfying 
$$ 
\liminf_{n\to\infty}\frac{p_n}{n}=\infty. 
$$ 
We construct a minimal subshift $X$, depending on the sequence $(p_n)$, 
that supports uncountably many ergodic measures and is such that 
$$ 
\limsup_{n\to\infty}\frac{P_X(n)}{p_n}=0. 
$$ 

We build the system inductively, by constructing words at each level and then using these words to define the language of a subshift.  Throughout we use superscripts to denote the level of the construction and subscripts to denote the words constructed at this level.  
\subsection*{Step 1: the base step of the construction}
 Set $\A=\{0,1\}$.  Define 
$$ 
w_1^1=\underbrace{00\cdots00}_{N_1\text{ times}}\hspace{-0.02 in}1 
$$ 
and 
$$ 
w_2^1=0\hspace{-0.02 in}\underbrace{11\cdots11}_{N_1\text{ times}}, 
$$ 
where $N_1$ is a large integer to be determined later.  
Note that the words $w_1^1$ and $w_2^1$ are distinct, as can be seen from the different frequencies of $0$'s and $1$'s. 
Define $X_1\subseteq\A^{\Z}$ to be the subshift of $\A^{\Z}$ consisting of all $x\in\A^{\Z}$ that can be written as bi-infinite concatenations of the words $w_1^1$ and $w_2^1$.  
Consider $n_1=\lfloor N_1/2\rfloor$ (note that $\lfloor \cdot\rfloor$ denotes the floor function).  
Then $\mathcal{L}_{n_1}(X_1)$ contains a word of all $0$'s, a word of all $1$'s, all words with exactly one $1$ (by 
concatenating $w_1^1$ with itself), all words with exactly one $0$ 
(by concatenating $w_2^1$ with itself), all words that begin with a continuous string of $0$'s followed by a continuous string of $1$'s 
(by concatenating $w_1^1$ and $w_2^1$), and all words with a continuous string of $1$'s followed by a continuous string of $0$'s (by concatenating $w_2^1$ and $w_1^1$).  
Thus for $n_1=\lfloor N_1/2\rfloor$, 
$$ 
P_{X_1}(n_1)=4n_1-2 
$$ 
and so if $Y\subseteq X_1$ is any subshift, then $P_Y(n_1)\leq4n_1-2$. 

\subsection*{Step 2: the inductive step} 
Assume we have constructed a nested sequence of subshifts 
$$ 
X_k\subseteq X_{k-1}\subseteq X_{k-2}\subseteq\cdots\subseteq X_1\subseteq\A^{\Z}
$$ 
and an increasing sequence of integers $n_1<n_2<\ldots<n_k$ such that we have $P_{X_i}(n_i)\leq\binom{n_{i-1}}{2}\cdot2^{i-1}|w_1^{i-1}|\cdot n_i$ for $i=1,2,\dots,k$. 
 Moreover, for each $i=1,2,\dots,k$, suppose we have constructed distinct words    

$$ 
w_1^i, w_2^i,\dots,w_{2^i}^i, 
$$ 
all of which have the same length, all of which lie in $\mathcal{L}(X_{i-1})$, all of which are concatenations of words from the set $\{w_1^{i-1},\dots,w_{2^{i-1}}^{i-1}\}$, and are such that $X_i$ is the subshift of $X_{i-1}$ consisting of all words that can be written as bi-infinite concatenations of $w_1^i,w_2^i,\dots,w_{2^i}^i$.  Further assume that for any $1\leq j<k\leq2^i$, there is no subword of length $|w_1^i|$ that occurs in both $w_j^iw_j^i$ and $w_k^iw_k^i$.  Note that since $w_1^i,\dots,w_{2^i}^i$ are concatenations of words from the set $\{w_1^{i-1},\dots,w_{2^{i-1}}^{i-1}\}$, 
all concatenations of $w_1^i,\dots,w_{2^i}^i$ are elements of $X_{i-1}$.  Finally, for $i>1$, suppose that for $1\leq j\leq 2^i$, the word $w_j^i$ contains each of the words $w_1^{i-1}, w_2^{i-1},\dots,w_{2^{i-1}}^{i-1}$ somewhere as a subword. 

To construct $X_{k+1}$, we start by defining
$$ 
w_1^{k+1}=\underbrace{(\overbrace{w_1^kw_1^k\cdots w_1^k}^{S_k\text{ times}}w_2^k)(\overbrace{w_1^kw_1^k\cdots w_1^k}^{S_k\text{ times}}w_2^k)\cdots(\overbrace{w_1^kw_1^k\cdots w_1^k}^{S_k\text{ times}}w_2^k)}_{N_k\text{ times}}w_2^kw_3^k\cdots w_{2^k}^k 
$$ 
and  
$$ 
w_2^{k+1}=\underbrace{(\overbrace{w_1^kw_1^k\cdots w_1^k}^{S_k\text{ times}}w_1^k)(\overbrace{w_1^kw_1^k\cdots w_1^k}^{S_k\text{ times}}w_1^k)\cdots(\overbrace{w_1^kw_1^k\cdots w_1^k}^{S_k\text{ times}}w_1^k)}_{N_k\text{ times}}w_2^kw_3^k\cdots w_{2^k}^k 
$$ 
where the brackets have no mathematical meaning other than to draw attention to the periodic nature of the initial prefix of these words and the fact that the word being periodized in $w_1^{k+1}$ is different from that in $w_2^{k+1}$.   Again, $N_k$ and $S_k$ are large integers to be determined later.  We continue the construction: for $1<j\leq 2^k$, 
defining
$$
v_j^k = \overbrace{w_j^kw_j^k\cdots w_j^k}^{S_k\text{ times}},
$$ we define 
$$ 
w_{2j-1}^{k+1}=w_1^kw_2^k\cdots w_{j-1}^k\underbrace{(v_j^kw_{j+1}^k)(v_j^kw_{j+1}^k)
\cdots (v_j^kw_{j+1}^k)}_{N_k\text{ times}}w_{j+1}^kw_{j+2}^k\cdots w_{2^k}^k
$$ 
and
$$ 
w_{2j}^{k+1}=w_1^kw_2^k\cdots w_{j-1}^k\underbrace{(v_j^kw_j^k)(v_j^kw_j^k)\cdots (v_j^kw_j^k)}_{N_k\text{ times}}w_{j+1}^k\cdots w_{2^k}^k, 
$$ 
where $w_{2^{k+1}+1}^{k+1}:=w_1^{k+1}$, meaning that subscripts are understood modulo $2^{k+1}$.

Note that for $1\leq j<i\leq2^{k+1}$, we claim that the words of length $|w_1^{k+1}|$ that occur in the word $w_i^{k+1}w_i^{k+1}$ are distinct from those that occur in the word $w_j^{k+1}w_j^{k+1}$ (provided $N_k$ is sufficiently large).  
If $i=j+1$, the claim follows because $w_{j+1}^k$ occurs at least $N_k$ times in any subword of length $|w_1^{k+1}|$ in $w_j^{k+1}w_j^{k+1}$ (once in each copy of the periodized word $v_j^kw_{j+1}^k$) and occurs at most $|w_1^k|\cdot2^k$ times in $w_i^{k+1}w_i^{k+1}$ (since it does not occur anywhere in the periodized word and can only occur in the prefix or suffix, which collectively have length $|w_1^k|\cdot2^k$).  For $N_k>|w_1^k|\cdot2^k$, no such word can occur in both $w_j^{k+1}w_j^{k+1}$ and $w_i^{k+1}w_i^{k+1}$.  If $i>j+1$, the claim follows because $w_j^k$ occurs at least $S_k\cdot N_k$ times in any subword of length $|w_1^k|$ in $w_j^{k+1}w_j^{k+1}$ and occurs at most $|w_1^k|\cdot2^k$ times in $w_i^{k+1}w_i^{k+1}$.  Further note that the frequency with which words of length $|w_j^k|$ that occur in $w_j^kw_j^k$, occur as subwords in $w_{2j-1}^{k+1}$ and $w_{2j}^{k+1}$ is at least $N_kS_k|w_1^k|/|w_1^{k+1}|$.  
By choosing $N_k$ sufficiently large, this can be made arbitrarily close to $1$.  Thus, 
by choosing $N_k$ sufficiently large, 
any word (or collection of words) that occur with frequency $\delta$ in $w_j^kw_j^k$ can be made to occur with frequency arbitrarily close to $\delta$ in $w_{2j-1}^{k+1}$ and $w_{2j}^{k+1}$.  
Furthermore, for $i\notin\{2j-1,2j\}$, the frequency with which words of length $|w_j^k|$ that occur in $w_j^kw_j^k$ occur in $w_i^{k+1}$ is at most $1-N_kS_k|w_1^k|/|w_1^{k+1}|$, 
as these words do not occur in any $w_t^kw_t^k$ for any $t\neq j$ (and subwords of this form occur with frequency at least $N_kS_k|w_1^k|/|w_1^{k+1}|$ in $w_i^{k+1}$).  Again, by choosing $N_k$ sufficiently large, 
this frequency can be made arbitrarily close to zero. 

Define $X_{k+1}\subseteq X_k$ to be the subshift consisting of all words that can be written as bi-infinite concatenations of the words $w_1^{k+1}, w_2^{k+1},\dots,w_{2^{k+1}}^{k+1}$.  Note that, by construction, every word of the form $w_j^{k+1}$ contains each of the words $w_1^k, w_2^k,\dots,w_{2^k}^k$ as a subword.  Furthermore $X_{k+1}\subseteq X_k$ and each element of $X_{k+1}$ can be written as a bi-infinite concatenation of the words $w_1^{k+1},w_2^{k+1},\dots,w_{2^{k+1}}^{k+1}$.  

Define $$n_{k+1}:=\lfloor|w_1^{k+1}|/2\rfloor$$ to be half the (common) length of the words $w_1^{k+1},\dots,w_{2^{k+1}}^{k+1}$.  We claim that $P_{X_{k+1}}(n_{k+1})\leq\left(\binom{2^k}{2}+1\right)n_{k+1}$.  Each of the words $w_i^{k+1}$ consists of a {\em prefix region} in which  $w_1^k,\dots,w_{i-1}^k$ are concatenated in order, then a {\em periodic region}
in which either $w_i^k$ is self-concatenated or $w_i^kw_i^k\cdots w_i^kw_{i+1}^k$ is self-concatenated, and finally a {\em suffix region} in which $w_{i+1}^k,\dots,w_{2^k}^k$ are concatenated in order.  Any word in $\mathcal{L}_{n_{k+1}}(X_{k+1})$ occurs either entirely within some word $w_i^{k+1}$ or partially overlaps two words $w_{i_1}^{k+1}w_{i_2}^{k+1}$.  For words of the first type, they may occur entirely within the periodic region or they start within the first $2^k|w_1^k|$ letters or they end within the last $2^k|w_1^k|$ letters of $w_i^{k+1}$.  
For those in the periodic region,  there are at most $2^{k+1}(S_k+1)|w_1^k|$ many such words since this is the number of words constructed multiplied by the maximal period of the periodic region.  For those overlapping two of the regions, 
there are at most $2^{k+1}|w_1^k|$ such words.  
Words of the second type must overlap $w_{i_1}^{k+1}$ and $w_{i_2}^{k+1}$ for some $i_1\neq i_2$ and so must start within the last $n_{k+1}$ letters of $w_{i_1}^{k+1}$, 
and there are at most $\binom{2^{k+1}}{2}n_{k+1}$ such words.  Therefore 
$$ 
P_{X_{k+1}}(n_{k+1})\leq2^{k+1}(S_k+1)|w_1^k|+2^{k+1}|w_1^k|+\binom{2^{k+1}}{2}n_{k+1}\leq\left(\binom{2^{k+1}}{2}+1\right)n_{k+1}, 
$$ 
so long as $n_{k+1}$ is sufficiently large when compared to $n_k$. 

This establishes the assumptions of the inductive hypothesis, and giving us an infinite nested chain of subshifts 
$$ 
\A^{\Z}\supseteq X_1\supseteq X_2\supseteq X_3\supseteq\cdots\supseteq X_k\supseteq\cdots 
$$ 

\subsection*{Step 3: construction and growth properties of the subshift $X$}  
The word $w_1^{k+1}$ starts with the word $w_1^k$ for all $k$ and so there is a $\{0,1\}$-coloring of $\N$ such that for all $k$, the prefix of length $|w_1^k|$ is $w_1^k$.  Let $\tilde{X}$ be the orbit closure of this word in $\A^{\N}$ and let $(X,\sigma)$ be its natural extension to a subshift of $\A^{\Z}$.  It follows immediately from the construction that 
$$ 
X\subseteq\bigcap_{k=1}^{\infty} X_k
$$ 
and $(X, \sigma)$ is a nonempty subshift.  
We have constructed an increasing sequence $n_1<n_2<\cdots<n_k<\cdots$ such that 
$$ 
P_X(n_k)\leq\left(\binom{2^k}{2}+1\right)n_k 
$$ 
for all $k>1$.  The integers $n_k$ are on the order of $1/2$ of the parameters $N_k|w_1^k|$, and 
in particular tend to infinity as $N_k$ tends to infinity.  The parameters $N_k$ have not yet been fixed, and we put some constraints on them now.  Recall that we are given the sequence $(p_n)$ such that
$$ 
\liminf_{n\to\infty}\frac{p_n}{n}=\infty. 
$$ 
For each $k\geq1$, there exists $M_k$ such that for all $n\geq M_k$, 
we have $$p_n>k\cdot\left(\binom{2^k}{2}+1\right)n.$$
Fix an increasing sequence of integers $(M_k)$ with this property.  We 
assume that $N_k$ is chosen to be sufficiently large such  that $N_k>2N_{k-1}$ and $n_k>M_k$.  
Since $P_X(n_k)/p_{n_k}<1/k$ for all $k$, we have that 
$$ 
\liminf_{n\to\infty}\frac{P_X(n)}{p_n}=0.
$$ 
Moreover, note that for any $k>1$ we have $X\subseteq X_k$.  Since $X_k$ consists only of words that can be obtained as bi-infinite concatenations of the words $w_1^k,\dots,w_{2^k}^k$ and 
each of these words contains all of the words $w_1^{k-1},\dots,w_{2^{k-1}}^{k-1}$ as a subword, 
it follows that each of the words $w_1^{k-1},\dots,w_{2^{k-1}}^{k-1}$ occurs syndetically in every element of $X_k$ (hence also in every element of $X$) with gap at most $|w_1^k|$.  Since every word in $\mathcal{L}(X)$ is itself a subword of $w_1^k$ for some $k$, it follows that every word in $\mathcal{L}(X)$ occurs syndetically in every element of $X$ with a bound on the gap that depends on the word but not on the element of $X$.  Hence $(X,\sigma)$ is minimal. 

\subsection*{Step 4: the set of ergodic measures on $X$} 
Fix a sequence $(\delta_i)$ of positive real numbers in the interval $(0,1)$ such that 
$$ 
\Delta:=\prod_{i=1}^{\infty}\delta_i>\frac{9}{10}. 
$$ 
Then for any $k>1$, we also have $\prod_{i=k}^{\infty}\delta_i>9/10$.  
Recall that that every word of the form $w_i^k$ consists of a prefix region, a periodic region, and a suffix region, where the lengths of the prefix and suffix regions are bounded independently of $N_k$.  
Thus we can further choose $N_k$ to grow sufficiently quickly such that 
$$ 
\frac{N_k}{|w_i^{k+1}|}\cdot\frac{S_k}{S_k+1}>\delta_k 
$$ 
for all $i=1,\dots,2^k$ (recall that all of these words have the same length).  It follows from this choice that the frequency with which $w_j^k$ (and the other words of length $|w_j^k|$ that occur when this word is self-concatenated) occur in $w_{2j-1}^{k+1}$ and $w_{2j}^{k+1}$ is at least $\delta_k$.

We claim that for each word $w_i^k$, the set of ergodic measures giving measure at least $\Delta$ to the set $[w_i^kw_i^k]$ is nonempty.  Moreover, we claim that the set of ergodic measures giving measure at least $\Delta$ to the set 
$$ 
\bigcup_{j=0}^{|w_i^k|}\sigma^j[\underbrace{w_i^kw_i^k\cdots w_i^k}_{S_k+1\text{ times}}] 
$$ 
is nonempty and that the set of ergodic measures giving measure at least $\Delta$ to the set 
$$ 
\bigcup_{j=0}^{|\overbrace{w_i^kw_i^k\cdots w_i^k}^{S_k\text{ times}}w_{i+1}^k|}\sigma^j[\underbrace{(\overbrace{w_i^kw_i^k\cdots w_i^k}^{S_k\text{ times}}w_{i+1}^k)(\overbrace{w_i^kw_i^k\cdots w_i^k}^{S_k\text{ times}}w_{i+1}^k)\cdots(\overbrace{w_i^kw_i^k\cdots w_i^k}^{S_k\text{ times}}w_{i+1}^k)}_{N_k\text{ times}}] 
$$ 
is also nonempty.  The first claim follows from either of the latter two.  We show the former, the latter being similar.  

Observe that $w_{2i}^{k+1}$ has a periodic region which is a long series of self-concatenations of
 $w_i^k$, that $w_{4i}^{k+2}$ is has a periodic region which is a long series of self-concatenations of $w_{2i}^{k+1}$, and similarly $w_{2^ji}^{k+j}$ has a periodic region which is a long series of self-concatenations of
  $w_{2^{j-1}i}^{k+j-1}$.  As these are all words in the language of $X$ and $X$ is closed, there is an element of $X$ for which the natural frequency of $w_i^k$ (and the other words of length $|w_i^k|$ that occur when it is self-concatenated) is at least $\prod_{j=k}^{\infty}\delta_j>\Delta$.  This follows because $w_i^k$ (and the other words of length 
  $|w_i^k|$ that occur when it is self-concatenated) occur with frequency at least $\delta_k$ in $w_{2i}^{k+1}$, 
  and inductively occurs with frequency at least $\prod_{j=k}^{\ell}\delta_j$ in $w_{w^{\ell-j}i}^{\ell}$ for any $\ell$. Therefore there is an invariant probability measure on $X$ that gives the union of these cylinder sets measure at least $\Delta$ and so there must be at least one ergodic measure that also has this property.  The claim follows. 

We next show that the set of ergodic measures on $X$ is uncountable (in fact with cardinality $\mathfrak{c}$).  We have shown that for each $k$ and $1\leq i\leq 2^k$,  there are two disjoint sets of ergodic measures giving large measure to the word $w_i^k$ and its periodic shifts.  The first (which we refer to as {\em type 0}) gives large measure to $\underbrace{w_i^kw_i^k\ldots w_i^k}_{N_k\text{ times}}$ (and its periodic shifts), 
whereas the second (which we 
refer to as {\em type 1}) gives small measure to this set and large measure to 
$$ 
\underbrace{(\overbrace{w_i^kw_i^k\ldots w_i^k}^{S_k\text{ times}}w_{i+1}^k)(\overbrace{w_i^kw_i^k\ldots w_i^k}^{S_k\text{ times}}w_{i+1}^k)\ldots(\overbrace{w_i^kw_i^k\ldots w_i^k}^{S_k\text{ times}}w_{i+1}^k)}_{N_k\text{ times}} 
$$ 
(and its periodic shifts).  Fix an infinite sequence $a_0, a_1,\dots$ of $0$'s and $1$'s.  For each $j$, let $\nu_j$ be an element of the set of ergodic measures on $X$ that start by giving large measure to $w_1^1$ and then are of  type $a_t$ for each $t=1,2,\dots,j$.  Let $\nu$ be a weak-* limit of a subsequence of these measures so that $\nu$ gives measure at least $\Delta$ to the word defining its type for each $t=1,2,\dots$  If we had chosen any other sequence of $0$'s and $1$'s it would have differed from $(a_t)$ at some finite stage and so there would be a cylinder set the measure resulting from that sequence gives large measure to which was given small measure by $\nu$.  Therefore each infinite sequence of $0$'s and $1$'s produces its own measure $\nu$.  Now, returning to the measure $\nu$, consider the sequence $(a_t)$ and the associate union of cylinder sets (which we call $\mathcal{S}_t$) that are given measure at least $\prod_{k=t}^{\infty}\delta_k$ by $\nu$.  Define 
$$ 
A_{\nu}=\bigcap_{t=1}^{\infty}\mathcal{S}_t. 
$$ 
Note that 
$$ 
\nu\left(\left(\bigcap_{t=1}^{k-1}\mathcal{S}_t\right)\setminus\left(\bigcap_{t=1}^k\mathcal{S}_t\right)\right)\leq1-\delta_k 
$$ 
and so we can arrange that 
$$ 
\nu\left(\bigcap_{t=1}^k\mathcal{S}_t\right)>8/10 
$$ 
for all $k$ by choosing $\delta_k$ to tend to $1$ sufficiently rapidly.  It follows that $\nu(A_{\nu})\geq8/10$.  Therefore there is an ergodic measure giving the set $A_{\nu}$ measure at least $8/10$ and so there is no loss of generality in assuming that $\nu$ is ergodic.  Therefore the set of ergodic measures is uncountable. 

This completes the proof of Theorem~\ref{thm:main}.  We note that by modifying the initial words $w_1^1$ and $w_2^1$, 
we can achieve the same result but ensure that the language has balanced numbers of short patterns.  More precisely, replacing  the initial use of $0$ by the word $01100110$ and the initial use of  $1$ by the word $11100100$ and carrying out the same construction, we have a system in which the average number of $0$'s and $1$'s on any short range is approximately one half.  This follows because all words later constructed are concatenations of $w_1^1$ and $w_2^1$ and so any word in the language of the shift of length larger than fourteen can be made into a word that is a concatenation of $w_1^1$'s and $w_2^1$'s by removing at most seven letters from each side of it (and this slightly shorter word has precisely the same number of $0$'s as $1$'s).

\section{Measure-theoretic complexity} 
\label{sec:measure-theoretic}
\subsection{Definition of slow entropy}
We review the definition of slow entropy, as defined by Katok and Thouvenot~\cite{KT} and Ferenczi~\cite{Fer}, 
adopting Ferenczi's notation in a way more convenient for our setting. 

Assume that $(X, \sigma)$ is a subshift.  
For $u,v\in\mathcal{L}_n(X)$, the {\em Hamming distance $d_H(u,v)$ between $u$ and $v$} is
$$ 
d_H(u,v)=\frac{|\{0\leq i<n\colon u_i\neq v_i\}|}{n}, 
$$ 
and this defines a metric on $\mathcal{L}_n(X)$.  For fixed $\varepsilon>0$ and $u\in\CL_N(X)$, define the {\em ball $B_\varepsilon(u)$ of radius $\varepsilon$ around $u$} by 
$$ 
B_{\varepsilon}(u)=\{v\in\mathcal{L}_n(X)\colon d_H(u,v)<\varepsilon\}. 
$$ 

Further assume that $\mu$ is an invariant measure on the shift $(X, \sigma)$.  When slow entropy is defined in~\cite{KT} and~\cite{Fer}, they consider an arbitrary measure preserving system and so a generating partition 
is a necessary ingredient.  As we are restricting ourselves to symbolic systems, we can assume
that the space $X$ is partitioned into cylinder sets of length one.  We implicitly make this assumption throughout and omit the partition from the notation.  

Define $K(n,\varepsilon,\sigma)$ to be the minimum number of words $u_1,u_2,\dots,u_k\in\mathcal{L}_n(X)$ such that 
$$ 
\mu\left(\bigcup_{i=1}^kB_{\varepsilon}(u_i)\right)>1-\varepsilon. 
$$ 
If $\varepsilon_1<\varepsilon_2$, then for any $u\in\mathcal{L}_n(X)$, we have
$B_{\varepsilon_1}(u)\subseteq B_{\varepsilon_2}(u)$.  
Thus $K(n,\varepsilon_2,\sigma)\leq K(n,\varepsilon_1,\sigma)$, 
meaning that $K(n,\varepsilon,\sigma)$ increases as $\varepsilon$ decreases.  If $(c_n)_{n\in\N}$ is a non-decreasing sequence of positive integers with $c_n\to\infty$, we say $P_{\sigma}^-(n)\succ(c_n)$ if 
$$ 
\lim_{\varepsilon\to0}\liminf_{n\to\infty}\frac{K(n,\varepsilon,\sigma)}{c_n}\geq1. 
$$ 
Similarly, we say that $P_{\sigma}^-(n)\prec(c_n)$ if 
$$ 
\lim_{\varepsilon\to0}\liminf_{n\to\infty}\frac{K(n,\varepsilon,\sigma)}{c_n}\leq1. 
$$ 
The analogous limits with $\liminf$ replaced by $\limsup$ define the conditions that $P_{\sigma}^+(n)\succ(c_n)$ and $P_{\sigma}^+(n)\prec(c_n)$, respectively.  It is shown in~\cite{KT} and~\cite{Fer} that for any fixed sequence $(c_n)$, the statement $P_{\sigma}^-(n)\prec(c_n)$ is a measure theoretic conjugacy invariant for $(X,\mu,\sigma)$ (as is 
the analogous statement for $P_{\sigma}^+(n)\succ(c_n)$).  
%
%

These notions of $P_{\sigma}^-(n)$ and $P_{\sigma}^+(n)$ clarify the statement of 
Theorem~\ref{thm:slow-entropy}, and we reproduce the statement: 
\begin{theorem*}[Theorem~\ref{thm:slow-entropy}]
Assume $(a_n)_{n\in\N}$ and $(b_n)_{n\in\N}$ are two nondecreasing sequences of positive integers such that $\lim a_n=\infty$, $\lim\frac{1}{n}\cdot\log(b_n)=0$, and $a_n\leq b_n$ for all $n\in\N$.  There exists a minimal subshift $(X_{\infty},\sigma)$ of topological entropy zero and an ergodic measure $\mu$ supported on $X_{\infty}$ such that 
$$ 
P_{\sigma}^-(n)\prec(a_n) \text{ and } P_{\sigma}^+(n)\succ(b_n). 
$$ 
\end{theorem*}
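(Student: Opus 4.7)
The plan is to build $X_\infty = \bigcap_k X_k$ as a nested intersection of subshifts defined inductively in the spirit of the proof of Theorem~\ref{thm:main}: at each stage $k$ fix a finite family $W_k = \{w_1^k,\dots,w_{r_k}^k\}$ of words of a common length $\ell_k$, each a concatenation of words from $W_{k-1}$ containing every $w_i^{k-1}$ at least once, and let $X_k$ be the subshift whose language consists of all subwords of bi-infinite concatenations of $W_k$. To realize both slow-entropy bounds simultaneously, the levels should be split along two interlaced subsequences $(k_j)$ and $(k'_j)$ of $\N$, with $\ell_{k+1}/\ell_k$ and the various $r_k$'s chosen to grow extremely fast between levels.

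\textbf{Separating stages (for $P_\sigma^+\succ (b_n)$).} At each $k_j$, choose $r_{k_j}\geq b_{\ell_{k_j}}$ new words that are pairwise Hamming-separated by a fixed constant $\eta>0$, each using every $w_i^{k_j-1}$ with roughly the same frequency. The subexponential hypothesis $\log(b_n)/n\to 0$ guarantees the existence of such an $\eta$-separated code of size $b_n$ for large $n$, so the construction is feasible as soon as $\ell_{k_j}/\ell_{k_j-1}$ is taken large. If the invariant measure $\mu$ assigns comparable mass to each $w_i^{k_j}$, then any Hamming-$\eta/3$ ball around a single $w_i^{k_j}$ can capture only the mass associated with that word, forcing $K(\ell_{k_j},\eta/3,\sigma)\geq r_{k_j}\geq b_{\ell_{k_j}}$; monotonicity of $K$ in $\varepsilon$ then yields $P_\sigma^+(n)\succ(b_n)$ after interpolating between the subsequence $(\ell_{k_j})$ and nearby $n$.

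\textbf{Compressing stages (for $P_\sigma^-\prec (a_n)$).} At each $k'_j$, use only a bounded number of new words, each of the form $(w_1^{k'_j-1})^{M}$ followed by a short suffix enumerating $w_2^{k'_j-1},\dots,w_{r_{k'_j-1}}^{k'_j-1}$ (needed to preserve minimality). Taking $M=\ell_{k'_j}/\ell_{k'_j-1}$ astronomically large makes the Hamming weight of the non-periodic suffix $o(\varepsilon)$, so every window of length $\ell_{k'_j}$ seen in $X_\infty$ is Hamming-$\varepsilon$-close to one of only $O_\varepsilon(1)$ rotations of the periodic bulk. Hence $K(\ell_{k'_j},\varepsilon,\sigma)$ is bounded uniformly in $j$ for each fixed $\varepsilon$. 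Since $a_n\to\infty$, this forces $\liminf_n K(n,\varepsilon,\sigma)/a_n=0$ for every $\varepsilon$, i.e.\ $P_\sigma^-(n)\prec(a_n)$.

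\textbf{Measure, minimality, entropy, and main obstacle.} An ergodic measure $\mu$ on $X_\infty$ is produced by specifying coherent word frequencies on each $W_k$ (nearly uniform at separating stages, concentrated on the periodic word at compressing stages) and passing to a weak-$*$ limit; a standard uniform-frequency argument, as in substitutive subshifts, yields unique ergodicity of the relevant frequency structure and hence ergodicity of $\mu$. Minimality follows from requiring every $w_i^{k-1}$ to occur syndetically in every $w_i^k$, exactly as in Theorem~\ref{thm:main}, and zero topological entropy follows from $r_k\leq b_{\ell_k}$ together with $\log(b_n)/n\to 0$. The main technical burden is verifying the $P_\sigma^-$ bound: one must argue that the fine-scale Hamming variety introduced at earlier separating stages becomes invisible at the much larger scale $\ell_{k'_j}$, which is achieved by making $\ell_{k'_j}/\ell_{k'_j-1}$ so large that a generic length-$\ell_{k'_j}$ window is dominated in Hamming weight by the periodic bulk of a single level-$k'_j$ word. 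The careful parameter bookkeeping needed to maintain $r_{k_j}\geq b_{\ell_{k_j}}$ at separating stages while still being able to flatten the Hamming geometry at the next compressing stage is the principal difficulty.
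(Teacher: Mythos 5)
There is a genuine gap, and it is located exactly where you say the ``principal difficulty'' lies: the compressing stage as you describe it is incompatible with the separating stage. You build the compressing-level words as a \emph{bounded} number of words, each a huge power of $w_1^{k'_j-1}$ with a short suffix listing the remaining $w_i^{k'_j-1}$. Any ergodic measure on the resulting intersection then gives the cylinder of $w_1^{k'_j-1}$ frequency close to $1$ and gives each of the other $r_{k'_j-1}-1$ words frequency $O(1/M)$. But your $P^+$ lower bound at the preceding separating scale requires precisely that $\mu$ assign \emph{comparable} mass to all $r_{k_j}\geq b_{\ell_{k_j}}$ separated words; once the mass concentrates on one of them, $O(\ell_{k_j})$ Hamming balls (the rotations of the periodic bulk) already cover measure $7/8$ at scale $\ell_{k_j}$, and $K(\ell_{k_j},\eta/3,\sigma)\geq b_{\ell_{k_j}}$ fails. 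Moreover, your stronger conclusion that $K(\ell_{k'_j},\varepsilon,\sigma)$ is bounded uniformly in $j$ for each $\varepsilon$ cannot hold for \emph{any} construction satisfying the theorem: it would give $P_\sigma^-(n)\prec(c_n)$ for every sequence $c_n\to\infty$, hence by Ferenczi's theorem (Theorem~\ref{thm:ferenczi}) the system would be Kronecker and would satisfy $P_\sigma^+(n)\prec(c_n)$ as well, contradicting $P_\sigma^+(n)\succ(b_n)$. This is exactly the obstruction the paper points out when explaining why the hypothesis $a_n\to\infty$ cannot be weakened.

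The paper's quiet phase avoids this by periodizing \emph{every} word of the previous level separately, setting $v_t^j=(w_t^j)^{M_j}$ for all $t$, so the measure remains spread essentially uniformly over the $k_j$ Hamming-separated words and the $P^+$ lower bound survives. The upper bound is then obtained not at the scale of the new words but at an intermediate scale $P_j\ll N_jM_j$: Lemma~\ref{lem:quiet} shows that all but $\varepsilon_j$ of the measure sits on windows of length $P_j$ lying entirely inside some periodic $v_t^j$, and there are at most $\sum_t|w_t^j|=k_jN_j\prod_s N_sM_s$ such windows; one then chooses $P_j$ so large that $a_{P_j}$ exceeds this count (possible only because $a_n\to\infty$). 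So $K(P_j,\varepsilon_j,\sigma)\leq a_{P_j}$, which grows with $j$ but still yields $\liminf K(n,\varepsilon,\sigma)/a_n\leq 1$. Your separating stage (a Gilbert--Varshamov-type separated code of size $b_n$, as in Proposition~\ref{prop:stats}) and your remarks on minimality and entropy are essentially in line with the paper, but the compressing mechanism must be repaired along these lines for the argument to close.
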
 
Before turning to the proof, we 
make a few remarks to place the result in context.  
Ferenczi showed  the following: 
\begin{theorem}[{Ferenczi~\cite[Proposition 3]{Fer}}]\label{thm:ferenczi} 
Let $(X,\sigma)$ be a subshift and suppose $\mu$ is an ergodic measure supported on $X$.  Then the following are equivalent: 
	\begin{enumerate}
	\item $(X,\mu,\sigma)$ is a Kronecker system; \\ 
	\item $P_{\sigma}^-(n)\prec(c_n)$ for any nondecreasing sequences $(c_n)$ that tends to infinity; \\ 
	\item $P_{\sigma}^+(n)\prec(c_n)$ for any nondecreasing sequences $(c_n)$ that tends to infinity. 
	\end{enumerate} 
\end{theorem}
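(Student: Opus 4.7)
The plan is to prove the cyclic implications $(1)\Rightarrow(3)\Rightarrow(2)\Rightarrow(1)$. The implication $(3)\Rightarrow(2)$ is immediate: for any $\varepsilon>0$ and any non-decreasing $(c_n)\to\infty$, $\liminf_n K(n,\varepsilon,\sigma)/c_n\le\limsup_n K(n,\varepsilon,\sigma)/c_n$, and sending $\varepsilon\to 0$ preserves the inequality.

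For $(1)\Rightarrow(3)$, I would invoke the Halmos--von Neumann theorem to identify $(X,\mu,\sigma)$ with a rotation $R_g$ on a compact metric abelian group $G$ carrying Haar measure $m$ and a translation-invariant metric $d$, and transport the length-one cylinder partition of $X$ to a measurable partition $\mathcal{P}=\{A_1,\dots,A_r\}$ of $G$. Given $\varepsilon>0$, I would approximate $\mathcal{P}$ by a partition $\mathcal{P}'$ whose atoms have boundaries of $m$-measure less than $\varepsilon/4$, then choose $\delta>0$ small enough that the $\delta$-thickening of $\bigcup_i\partial A_i'$ has $m$-measure at most $\varepsilon/2$, and fix a finite $\delta$-net $y_1,\dots,y_{N_\varepsilon}$ of $G$. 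Because $R_g$ acts isometrically, $d(R_g^k y,R_g^k y_j)<\delta$ for every $k$ whenever $d(y,y_j)<\delta$, so the $\mathcal{P}'$-name of $y$ agrees with that of $y_j$ except at indices $k$ for which $R_g^k y_j$ lies within $\delta$ of $\partial\mathcal{P}'$; by the mean ergodic theorem applied to the indicator of that thickening, these bad indices form at most an $\varepsilon/2$-fraction of $\{0,\dots,n-1\}$ uniformly in $n$, and a further $\varepsilon/2$-error absorbs the switch from $\mathcal{P}'$ back to $\mathcal{P}$. This yields $K(n,\varepsilon,\sigma)\le N_\varepsilon$ for every $n$, so $\limsup_n K(n,\varepsilon,\sigma)/c_n=0$ for any $c_n\to\infty$.

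For $(2)\Rightarrow(1)$ I argue the contrapositive. First, condition (2) is equivalent to $\liminf_n K(n,\varepsilon,\sigma)<\infty$ for every $\varepsilon>0$: if $\liminf_n K(n,\varepsilon_0,\sigma)=\infty$ for some $\varepsilon_0$, then the non-decreasing sequence $c_n:=\lceil\sqrt{\min_{m\ge n}K(m,\varepsilon_0,\sigma)}\rceil$ tends to infinity yet $\liminf_n K(n,\varepsilon_0,\sigma)/c_n=\infty$, which together with the monotonicity of $K(n,\varepsilon)$ in $\varepsilon$ contradicts (2). Assuming $\liminf_n K(n,1/k,\sigma)<\infty$ for every $k$, a diagonal argument yields a single subsequence $n_j\to\infty$ with $K(n_j,1/k,\sigma)\le M_k$ for every $k$; along $n_j$, most of $X$ is covered by $M_k$ Hamming balls of radius $1/k$. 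Convert these into finite partitions $\mathcal{Q}_{j,k}$ with at most $M_k$ atoms by sending each $x$ to its nearest Hamming-ball center, and by a further diagonal passage extract a limiting refining family of partitions $\mathcal{Q}_k$. Using this family I would show that for every cylinder set $A$ the Koopman orbit $\{U_\sigma^n 1_A:n\in\mathbb{Z}\}$ is precompact in $L^2(\mu)$; by Halmos--von Neumann, the precompactness of all such orbits is equivalent to the system having pure discrete spectrum, hence being Kronecker.

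The main obstacle is the last step of $(2)\Rightarrow(1)$: converting bounded Hamming-covers at every scale $1/k$ along a common subsequence $n_j$ into $L^2$-precompactness of Koopman orbits. Bounded covers at a single scale along a single sequence give only a form of rigidity, which is too weak on its own to force discrete spectrum; the crux is exploiting the refining family of bounded covers simultaneously to manufacture the compact group structure asserted by the Halmos--von Neumann theorem.
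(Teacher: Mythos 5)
The paper itself does not prove this statement---it is quoted from Ferenczi~\cite{Fer} (Proposition 3)---so your attempt must stand on its own. The implication $(3)\Rightarrow(2)$ is indeed trivial, and your argument for $(1)\Rightarrow(3)$ is the standard one and is sound modulo routine repairs: the transported atoms are only defined mod $m$-null sets, so ``boundary'' only makes sense after the approximation by $\mathcal{P}'$; the ergodic-theorem bound on the density of bad indices is uniform only for large $n$ (which suffices, since you only need $\limsup_n K(n,\varepsilon,\sigma)/c_n=0$); and the centers of your Hamming balls must be replaced by nearby words of $\mathcal{L}_n(X)$ at the cost of doubling the radius, since the definition of $K$ requires legal words as centers. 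Your reduction of condition (2) to the statement that $\liminf_n K(n,\varepsilon,\sigma)<\infty$ for every $\varepsilon>0$ is also correct.

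The genuine gap is exactly where you flag it: $(2)\Rightarrow(1)$, which is the entire substance of Ferenczi's proposition. Everything after the diagonal extraction of the subsequence $(n_j)$ is a plan rather than a proof. Concretely: (a) ``extract a limiting refining family of partitions $\mathcal{Q}_k$'' is undefined---the partitions $\mathcal{Q}_{j,k}$ are built from names of different lengths $n_j$, there is no specified topology in which they converge, and no reason the limits would refine one another; (b) even granting such a family, a bound $K(n_j,1/k,\sigma)\le M_k$ controls Hamming distances of $n_j$-names along one subsequence, whereas precompactness of $\{U_\sigma^m \one_A\}$ in $L^2(\mu)$ requires controlling $\mu(\sigma^{-m}A\,\triangle\,\sigma^{-m'}A)$ for all pairs $m,m'$; the passage from finite-scale covers along a subsequence to covers of infinite names in the Besicovitch pseudometric is precisely the nontrivial step, since the asymptotic density of disagreement between two orbits need not be the limit of the finite-scale Hamming distances (the pair of points need not be generic for any joining). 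As you yourself note, bounded covers at a single scale yield only a rigidity-type conclusion, which does not force discrete spectrum. To close the argument you would need either Ferenczi's original proof or an equivalent characterization of Kronecker systems (mean/Besicovitch almost periodicity of names, or Kushnirenko's sequence-entropy criterion) together with an actual derivation of that characterization from $\liminf_n K(n,\varepsilon,\sigma)<\infty$ for all $\varepsilon$. As it stands, the proposal proves $(1)\Rightarrow(3)\Rightarrow(2)$ but not the converse, so it does not establish the theorem.
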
 
This means that the assumption on the lower bound $(a_n)_{n\in\N}$ in Theorem~\ref{thm:slow-entropy}
can not be lowered as long as we still require that $P_{\sigma}^+(n)\succ(b_n)$, as this second condition implies that $(X,\mu,\sigma)$ is not a Kronecker system and so there must be some sequence $(c_n)_{n\in\N}$ that tends to infinity 
and is such that $P_{\sigma}^-(n)\succ (c_n)$. 

At the other extreme, Katok showed: 
\begin{theorem}[{Katok\cite[Theorem 1.1]{Kat}}]\label{thm:katok}  
Let $(X,\sigma)$ be a subshift and suppose $\mu$ is an ergodic measure supported on $X$.  Then the following are equivalent: 
	\begin{enumerate}
	\item $(X,\mu,\sigma)$ has positive entropy; \\ 
	\item there exists $\lambda>1$ such that $P_{\sigma}^-(n)\succ(\lambda^n)$; \\ 
	\item there exists $\lambda>1$ such that $P_{\sigma}^+(n)\succ(\lambda^n)$. 
	\end{enumerate} 
\end{theorem}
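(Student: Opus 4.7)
The plan is to prove Katok's equivalence by pinning the slow-entropy quantity $K(n,\varepsilon,\sigma)$ to the Kolmogorov--Sinai entropy $h:=h_\mu(\sigma)$ via the Shannon--McMillan--Breiman theorem applied to the generating partition into length-one cylinders. The implication (2)$\Rightarrow$(3) is immediate because $\liminf\le\limsup$, so only (1)$\Rightarrow$(2) and (3)$\Rightarrow$(1) require argument. The supporting combinatorial ingredient is the Hamming-volume estimate
$$
V(n,\varepsilon):=\sum_{k=0}^{\lfloor \varepsilon n\rfloor}\binom{n}{k}(|\A|-1)^k,
$$
for which Stirling yields $\tfrac{1}{n}\log_2 V(n,\varepsilon) \to H_2(\varepsilon)+\varepsilon\log_2(|\A|-1)$, where $H_2$ is the binary entropy; crucially this quantity tends to $0$ as $\varepsilon\to 0^+$.

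For (1)$\Rightarrow$(2), assume $h>0$. Given small $\delta>0$, Shannon--McMillan--Breiman produces, for all sufficiently large $n$, a typical set $T_n\subseteq \CL_n(X)$ with $\mu\bigl(\bigcup_{w\in T_n}[w]\bigr)>1-\delta$ and $\mu([w])\le 2^{-n(h-\delta)}$ for every $w\in T_n$. Any family of $K(n,\varepsilon,\sigma)$ Hamming balls whose union has measure exceeding $1-\varepsilon$ must therefore enumerate at least $(1-\varepsilon-\delta)\cdot 2^{n(h-\delta)}$ members of $T_n$, whence
$$
K(n,\varepsilon,\sigma)\;\geq\;\frac{(1-\varepsilon-\delta)\,2^{n(h-\delta)}}{V(n,\varepsilon)}.
$$
Choose $\varepsilon>0$ small enough that $H_2(\varepsilon)+\varepsilon\log_2(|\A|-1)<h/4$ and set $\delta=h/8$; then the right-hand side eventually exceeds $(2^{h/2})^n$, so $P_\sigma^-(n)\succ(\lambda^n)$ with $\lambda=2^{h/2}$.

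For (3)$\Rightarrow$(1), I argue contrapositively: assume $h=0$. Given any $\delta>0$, Shannon--McMillan--Breiman produces for all large $n$ a set $T_n\subseteq \CL_n(X)$ with $|T_n|\le 2^{\delta n}$ and $\mu\bigl(\bigcup_{w\in T_n}[w]\bigr)>1-\varepsilon$. Since $w\in B_\varepsilon(w)$, the family $\{B_\varepsilon(w)\}_{w\in T_n}$ covers the required measure, so $K(n,\varepsilon,\sigma)\le 2^{\delta n}$ for $n$ large. Given $\lambda>1$, pick $\delta<\log_2\lambda$ to obtain $\limsup_{n\to\infty} K(n,\varepsilon,\sigma)/\lambda^n=0$ for every fixed $\varepsilon>0$. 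Hence $\lim_{\varepsilon\to 0}\limsup_{n\to\infty}K(n,\varepsilon,\sigma)/\lambda^n=0<1$, contradicting (3).

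I do not expect a genuinely hard step: the proof is really a packaging of the Shannon--McMillan--Breiman theorem together with the elementary volume bound for Hamming balls. The main things requiring care are (a) ensuring the Shannon--McMillan--Breiman estimate on $T_n$ holds simultaneously for measure and cardinality with uniform $\delta$-control, and (b) observing that in the definition of $K(n,\varepsilon,\sigma)$ the covering is by balls with centers in $\CL_n(X)$ rather than all of $\A^n$, which only improves the upper bound needed for (3)$\Rightarrow$(1) while the upper estimate $V(n,\varepsilon)$ on ball cardinality is still valid for the lower-bound argument in (1)$\Rightarrow$(2).
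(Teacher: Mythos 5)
Your argument is correct, but note that the paper does not prove this statement at all: Theorem~\ref{thm:katok} is quoted verbatim from Katok~\cite[Theorem 1.1]{Kat} as a black box, so there is no ``paper proof'' to match. What you have written is, in essence, the standard proof underlying Katok's result, specialized to subshifts where the partition into length-one cylinders is automatically generating: the Shannon--McMillan--Breiman theorem pins the number of $n$-cylinders needed to capture most of the measure at $2^{nh_\mu(\sigma)(1+o(1))}$, and the Hamming-ball volume estimate (the same Stirling computation the paper itself invokes in Proposition~\ref{prop:stats}) shows that passing from cylinders to $\varepsilon$-Hamming balls costs only a factor of $2^{n\cdot o_\varepsilon(1)}$, which is negligible against any exponential once $\varepsilon$ is small. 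Both directions are sound: the covering-number lower bound $K(n,\varepsilon,\sigma)\geq(1-\varepsilon-\delta)2^{n(h-\delta)}/V(n,\varepsilon)$ is a correct double count of SMB-typical words against ball cardinalities, and in the zero-entropy direction the trivial cover by balls centered at the at most $2^{\delta n}$ high-measure words does the job. Two small points you leave implicit but which are immediate: to conclude $P_\sigma^-(n)\succ(\lambda^n)$ from a single sufficiently small $\varepsilon$ you need the monotonicity of $K(n,\varepsilon,\sigma)$ in $\varepsilon$ (stated in the paper) so that the outer limit $\varepsilon\to0$ inherits the bound; and $(\lambda^n)$ should be read as $(\lceil\lambda^n\rceil)$ to fit the paper's convention that the comparison sequences are integer-valued. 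Neither affects the validity of the argument.
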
 
In particular, since $(a_n)_{n\in\N}$ grows subexponentially and $P_{\sigma}^-(n)\prec(a_n)$, this implies that $(X,\mu,\sigma)$ has zero entropy and so we cannot have $P_{\sigma}^+(n)\succ(b_n)$ for any sequence $(b_n)_{n\in\N}$ with positive exponential growth rate.  Theorem~\ref{thm:slow-entropy} implies that, even subject to the requirement that $P_{\sigma}^-(n)\prec(a_n)$, $P_\sigma^+(n)$ can grow as quickly as we want, subject to the necessary condition that it grow subexponentially, as given by Katok's Theorem.

Katok and Thouvenot~\cite{KT} and Serafin~\cite{S} give constructions of a subshift with zero topological entropy
and $P_{\sigma}^+(n)$ growing arbitrarily quickly (of course still subject to the condition that the growth be subexponential), but without a requirement that $P_{\sigma}^-(n)$ grow slowly.  
In showing that we achieve our upper bound (what we refer to as the loud phase), our 
construction and the derivation of its properties has many features in common with their constructions.  However, as we generalize this approach, we include all details for the sake of clarity.  

\subsection{Large-scale features of the construction} 
We fix the  nondecreasing sequences $(a_n)_{n\in\N}$ and $(b_n)_{n\in\N}$ 
of positive integers such that $\lim_{n\to\infty} a_n=\infty$, $\lim_{n\to\infty}\frac{1}{n}\cdot\log(b_n)=0$, and $a_n\leq b_n$ for all $n\in\N$.
Let $(\varepsilon_n)_{n\to\infty}$ be a decreasing sequence of positive real numbers such that $\lim_{n\to\infty}\varepsilon_n=0$.  We inductively construct a descending sequence of positive entropy subshifts: 
$$ 
\A^{\Z}=:X_0\supseteq X_1\supseteq X_2\supseteq X_3\supseteq\cdots\supseteq X_n\supseteq\cdots 
$$ 
and an increasing sequence of positive integers 
$$ 
N_1<P_1<N_2<P_2<N_3<P_3<\cdots.
$$ 
In our construction, we show that 
$$ 
X_{\infty}:=\bigcap_{i=1}^{\infty}X_i 
$$ 
is nonempty and show that if $\mu$ is any ergodic measure supported on $X_{\infty}$, then 
\begin{equation}\label{eq1} 
K(N_i,1/8,\sigma)>b_{N_i} 
\end{equation} 
and 
\begin{equation}\label{eq2} 
K(P_i,\varepsilon_i,\sigma)\leq a_{P_i}. 
\end{equation} 
Since $K(n,\varepsilon,\sigma)$ increases as $\varepsilon$ decreases, 
it follows that for sufficient small $\varepsilon> 0$, 
$$ 
\limsup_{n\to\infty}\frac{K(n,\varepsilon,\sigma)}{b_n}\geq\limsup_{i\to\infty}\frac{K(N_i,\varepsilon,\sigma)}{b_{N_i}}\geq\limsup_{i\to\infty}\frac{K(N_i,1/8,\sigma)}{b_{N_i}}
\geq1, 
$$ 
meaning that 
$$ 
\lim_{\varepsilon\to0}\limsup_{n\to\infty}\frac{K(n,\varepsilon,\sigma)}{b_n}\geq1, 
$$ 
and so $P_{\sigma}^+(n)\succ(b_n)$. 
Similarly, since $\lim_{i\to\infty}\varepsilon_i=0$ and again passing to the subsequence $(P_i)_{i\in\N}$, it follows that 
$$ 
\liminf_{n\to\infty}\frac{K(n,\varepsilon,\sigma)}{a_n}\leq\liminf_{i\to\infty}\frac{K(P_i,\varepsilon,\sigma)}{a_{P_i}}\leq\liminf_{i\to\infty}\frac{K(P_i,\varepsilon_i,\sigma)}{a_{P_i}}\leq1. 
$$ 
This means that 
$$ 
\lim_{\varepsilon\to0}\liminf_{n\to\infty}\frac{K(n,\varepsilon,\sigma)}{a_n}\leq1 
$$ 
and so $P_{\sigma}^-(n)\prec(a_n)$.  Therefore,  to prove Theorem~\ref{thm:slow-entropy}, 
it suffices to construct the shift $X_{\infty}$, the ergodic measure $\mu$ and show that they satisfy~\eqref{eq1} and~\eqref{eq2}.  To show that this construction can be carried out to produce a minimal shift, we note that by the Jewett-Krieger Theorem there is a strictly ergodic model for $(X_{\infty},\mu,\sigma)$ and this model must obey the same slow entropy bounds because they are invariants of measure theoretic conjugacy.  We start in Section~\ref{subsec:estimates} by providing estimates on how the words in the language of the shift must be constructed and then in Section~\ref{subsec:proof-of-main} complete the construction of the subshift and verify its properties.

\subsection{Estimates for the language} 
\label{subsec:estimates}

We start with a lemma for use in the proof of the main theorem: 
\begin{lemma}\label{lem:combinatorics} 
Let $k,n\in\N$ be fixed and let $A_1,\dots,A_{2k-1}\subseteq\{1,2,\dots,n\}$ be any collection of subsets satisfying $|A_i|\geq n/2$ for $i=1, \ldots, 2k-1$.  Then there exist distinct indices $1\leq i_1<i_2<\cdots<i_k\leq2k-1$ and $s\in\{1,2,\dots,n\}$ such that $s\in A_{i_j}$ for all $j=1,2,\dots,k$. 
\end{lemma}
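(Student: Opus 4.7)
The plan is to prove this by a straightforward double-counting / pigeonhole argument on the incidence structure between the elements of $\{1,\dots,n\}$ and the sets $A_1,\dots,A_{2k-1}$.

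First I would set up the counting function. For each $s \in \{1,2,\dots,n\}$, let
$$
f(s) := \bigl|\{i \in \{1,\dots,2k-1\} : s \in A_i\}\bigr|.
$$
Then by counting incidences in two ways,
$$
\sum_{s=1}^{n} f(s) \;=\; \sum_{i=1}^{2k-1} |A_i| \;\geq\; (2k-1)\cdot\frac{n}{2}.
$$

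Next I would argue by contradiction: suppose that $f(s) \leq k-1$ for every $s \in \{1,\dots,n\}$. Then
$$
\sum_{s=1}^{n} f(s) \;\leq\; n(k-1) \;=\; \frac{n(2k-2)}{2} \;<\; \frac{n(2k-1)}{2},
$$
contradicting the lower bound above. Hence there must exist some $s \in \{1,\dots,n\}$ with $f(s) \geq k$, meaning $s$ lies in at least $k$ of the sets $A_i$. Pick any $k$ such indices and order them as $i_1 < i_2 < \cdots < i_k$ to obtain the desired conclusion.

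I do not anticipate any real obstacle here: the statement is a clean pigeonhole, and the only thing to be careful about is the arithmetic showing that the average multiplicity $(2k-1)/2$ strictly exceeds $k-1$, which forces some element to achieve multiplicity at least $k$. The hypothesis $|A_i| \geq n/2$ is exactly tight for this bound — weakening it would allow configurations in which every element lies in at most $k-1$ sets.
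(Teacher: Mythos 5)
Your proof is correct and is essentially the same double-counting/pigeonhole argument as the paper's: both count incidences two ways, assume every element lies in at most $k-1$ sets, and derive the contradiction $n(k-1) \geq \sum_i |A_i| \geq (2k-1)n/2 > n(k-1)$. No issues.
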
 
\begin{proof} 
For contradiction, suppose $A_1,\dots,A_{2k-1}\subseteq\{1,2,\dots,n\}$ are a collection of subsets satisfying $|A_i|\geq n/2$ for $i=1, \ldots, 2k-1$, but no subcollection of $k$ of these sets have nonempty intersection.  For each $x\in\{1,2,\dots,n\}$, let $i(x)$ denote the number of distinct indices $j$ such that $x\in A_j$.  Then $i(x)\leq k-1$ for all $x$ and so 
	\begin{align*} 
	n(k-1)&<\sum_{i=1}^{2k-1}|A_i|  =\sum_{m=0}^{k-1}m\cdot|\{x\colon i(x)=m\}| \\ 
&\leq \sum_{m=0}^{k-1}(k-1)\cdot|\{x\colon i(x)=m\}| 
	=n(k-1), 
	\end{align*} 
where the last equality holds since the level sets of $i(x)$ partition $\{1,2,\dots,n\}$.  Thus no such collection of sets exists. 
\end{proof} 

\begin{lemma}\label{lem:quiet} 
Let $\mathcal{A}$ be a finite alphabet and suppose $k,N,M\in\N$ are fixed and assume that $M>1$.  Let $w_1,\dots,w_k\in\mathcal{L}_N(\A^{\Z})$ and for each $1\leq i\leq k$, let 
$$ 
v_i=\underbrace{w_iw_iw_i\cdots w_i}_{\text{$M$ times}}. 
$$ 
Let $(X,\sigma)$ denote the subshift (with alphabet $\CA$) defined by the words $v_1,\dots,v_k$.   Assume $P\in[1,NM)$ is  an integer and let $\mathcal{W}_P\subseteq\mathcal{L}_P(X)$ be the set of all words $u$ for which there exists $i$ such that $u$ is a subword of $v_i$.  Then if $\mu$ is any ergodic measure supported on $X$, we have 
$$ 
\mu\left(\bigcup_{w\in\mathcal{W}_P}[w]\right)\geq1-\frac{P-1}{NM}.  
$$ 
\end{lemma}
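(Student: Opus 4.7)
The plan is to show that for any $x\in X$, the density of positions $n$ at which the length-$P$ window $x_n\cdots x_{n+P-1}$ fails to be a subword of any $v_i$ is at most $(P-1)/(NM)$, and then convert this pointwise density bound into a measure bound via the Birkhoff pointwise ergodic theorem.

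\textbf{Block decomposition and counting.} Fix $x\in X$. Since $X$ is by definition the closure of the shifts of bi-infinite concatenations of $v_1,\dots,v_k$, every finite subword of $x$ occurs as a subword of some finite concatenation $v_{i_1}v_{i_2}\cdots v_{i_r}$. In particular, for each $L\in\N$ the word $x_{-L}\cdots x_L$ embeds into such a concatenation, which places upon it a block structure whose boundaries are equally spaced at distance $|v_{i_j}|=NM$. Call a position $n\in[-L,L-P+1]$ \emph{good} if the window $[n,n+P-1]$ lies entirely inside a single block of this decomposition; for a good position, $x_n\cdots x_{n+P-1}$ is automatically a subword of one of the $v_{i_j}$ and hence lies in $\CW_P$. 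Consequently, the set $B:=X\setminus\bigcup_{w\in\CW_P}[w]$ satisfies $\sigma^n x\in B$ only at positions whose window straddles a block boundary, and within each period of length $NM$ there are exactly $P-1$ such straddling positions (namely those within $P-1$ of the next boundary).

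\textbf{Ergodic averaging.} The counting above gives
$$
\frac{1}{2L+1}\sum_{n=-L}^{L}\one_B(\sigma^n x)\;\le\;\frac{P-1}{NM}+\frac{C}{L}
$$
uniformly in $x\in X$, where the error $C/L$ absorbs the two incomplete periods at the endpoints of $[-L,L]$. Since $\mu$ is ergodic, the Birkhoff pointwise ergodic theorem applied to $\one_B$ shows that for $\mu$-almost every $x$ the left-hand side converges to $\mu(B)$ as $L\to\infty$. Passing to the limit yields $\mu(B)\le(P-1)/(NM)$, which is exactly the claimed inequality.

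The argument is essentially a marriage of a clean combinatorial counting estimate with the ergodic theorem, and I do not foresee a serious obstacle. The only mild subtlety is that the block decomposition chosen in the first step depends on $L$ and need not be unique; but since the bound of $P-1$ straddling positions per period of $NM$ is insensitive to which admissible decomposition one picks, this causes no difficulty. (One could equally well avoid ergodicity and invoke only $\sigma$-invariance of $\mu$ by integrating the pointwise estimate directly, since $\mu(B)=\int\one_B\,d\mu$ equals the expectation of any Birkhoff average.)
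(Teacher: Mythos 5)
Your proof is correct and takes essentially the same route as the paper: both decompose a generic orbit into $v_i$-blocks of common length $NM$, observe that a length-$P$ window can fail to be in $\CW_P$ only if it straddles a block boundary (at most $P-1$ positions per period), and pass to the limit via the pointwise ergodic theorem. Your added care about the non-uniqueness of the block decomposition and the observation that mere $\sigma$-invariance would suffice are both sound, but they do not change the argument in any essential way.
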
 
\begin{proof} 
Assume $\mu$ is an ergodic measure supported on $X$ 
and set $\mathcal{S}=\bigcup_{w\in\mathcal{W}_P}[w]$.  By the pointwise ergodic theorem, there exists $x\in X$ such that 
$$ 
\mu(\mathcal{S})=\lim_{n\to\infty}\frac{1}{2n+1}\sum_{i=-n}^n\one_{\mathcal{S}}(\sigma^ix). 
$$ 
By definition of the subshift $(X, \sigma)$, the element $x$ can be parsed into a bi-infinite concatenation of the words $v_1,\dots,v_k$.  Fix one such way to parse $x$ and 
let $\mathcal{I}\subseteq\Z$ be the set of indices at which these words begin; note that $\mathcal{I}$ is an arithmetic progression in $\Z$ with gap $NM$.   
For each $i\in\Z$, the cylinder set of length $P$ that contains $\sigma^ix$ is contained in $\mathcal{S}$, unless $i$ lies within distance $P-1$ of the smallest element of $\mathcal{I}$ larger than $i$.  
Thus for any $n\in\N$, 
$$ 
\sum_{i=-n}^n\one_{\mathcal{S}}(\sigma^ix)\geq2n+1-(P-1)\cdot|\mathcal{I}\cap[-n,n]|. 
$$ 
Since $\mathcal{I}$ is an arithmetic progression with gap $NM$, it follows that 
\begin{equation*}
\mu(\mathcal{S})\geq\lim_{n\to\infty}\frac{2n+1-(P-1)\cdot|\mathcal{I}\cap[-n,n]|}{2n+1}\geq1-\frac{P-1}{NM}.\ \qedhere
\end{equation*}
\end{proof} 

\begin{lemma}\label{lem:close-measure}  
Let $N\in\N$ and $\varepsilon>0$ be fixed.  For each $n$, let $\mathcal{W}_n\subseteq\{1,2,\dots,N\}^n$ be the set of words $w=(w_0,w_1,\dots,w_{n-1})$ for which 
$$ 
(1-\varepsilon)\frac{n}{N}<|\{i\in[0,n)\colon w_i=a\}|<(1+\varepsilon)\frac{n}{N} 
$$ 
for all $a\in\{1,2,\dots,N\}$.  Then there exists $M$ such that for all $n>M$, we have $|\mathcal{W}_n|>(1-\varepsilon)N^n$. 
\end{lemma}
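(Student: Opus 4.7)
The plan is to treat the set $\{1,2,\dots,N\}^n$ as a probability space under the uniform measure (so each word has probability $N^{-n}$) and show that a uniformly random word lies in $\CW_n$ with probability at least $1-\varepsilon$ once $n$ is large. Since $|\CW_n|$ equals $N^n$ times this probability, the conclusion follows.

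First I would fix $a\in\{1,2,\dots,N\}$ and consider the random variable $X_a(w):=|\{i\in[0,n):w_i=a\}|$ on the uniform probability space. Writing $X_a=\sum_{i=0}^{n-1}\one_{\{w_i=a\}}$ as a sum of $n$ independent Bernoulli$(1/N)$ random variables, the mean is $n/N$ and the variance is $n\cdot\tfrac{1}{N}(1-\tfrac{1}{N})\leq n/N$. Then by Chebyshev's inequality,
\[
\Pr\!\left(\left|X_a-\tfrac{n}{N}\right|\geq \varepsilon\tfrac{n}{N}\right)\leq \frac{n/N}{(\varepsilon n/N)^2}=\frac{N}{\varepsilon^2 n}.
\]

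Next I would apply the union bound over all $N$ choices of $a$, obtaining
\[
\Pr\!\left(w\notin\CW_n\right)\leq \sum_{a=1}^{N}\Pr\!\left(\left|X_a-\tfrac{n}{N}\right|\geq\varepsilon\tfrac{n}{N}\right)\leq \frac{N^2}{\varepsilon^2 n}.
\]
Choosing $M$ such that $N^2/(\varepsilon^2 M)<\varepsilon$ (for example $M=\lceil N^2/\varepsilon^3\rceil$) guarantees that for all $n>M$ the uniform probability that $w\in\CW_n$ exceeds $1-\varepsilon$, so $|\CW_n|>(1-\varepsilon)N^n$, as required.

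There is no real obstacle here: the argument is a standard weak law of large numbers plus union bound. The only mild subtlety is to make sure the tail bound is strict (rather than non-strict), which is handled because the definition of $\CW_n$ uses strict inequalities while Chebyshev delivers the complementary strict inequality, and because we only require the conclusion for $n>M$.
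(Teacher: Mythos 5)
Your proof is correct, and it takes a genuinely different (and more elementary) route than the paper. Both arguments share the same final counting step: view $\CW_n$ as an event of probability greater than $1-\varepsilon$ under a measure that assigns each word of length $n$ mass $N^{-n}$, and divide. The difference is in how the probability bound is obtained. The paper works on the infinite product space $\{1,\dots,N\}^{\N}$ with the Bernoulli measure and invokes the pointwise ergodic theorem to get almost-everywhere convergence of the letter frequencies, then extracts a single $M$ valid on a set of measure at least $1-\varepsilon$ (an Egorov-type uniformization step that is asserted rather than spelled out). You instead work directly on the finite uniform space $\{1,\dots,N\}^n$, write the letter count $X_a$ as a sum of independent Bernoulli$(1/N)$ variables, and apply Chebyshev plus a union bound. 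Your version buys an explicit, quantitative threshold $M=\lceil N^2/\varepsilon^3\rceil$ and avoids both the ergodic theorem and the uniformization step; the paper's version is shorter to state given that the ergodic theorem is already a standing tool in that section. Your handling of the strict-versus-nonstrict inequalities is also correct: the complement of $\CW_n$ is exactly the event $|X_a-n/N|\geq\varepsilon n/N$ for some $a$, which is what Chebyshev bounds. No gaps.
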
 
\begin{proof} 
Let $\nu$ be the $(1/N,1/N,\dots,1/N)$-Bernoulli measure on $\{1,2,\dots,N\}^{\N}$.  By the pointwise ergodic theorem, for almost all $x\in\{1,2,\dots,N\}^{\N}$ and for each $a\in\{1,2,\dots,N\}$ we have 
$$ 
\lim_{n\to\infty}\frac{1}{n}\sum_{i=0}^{n-1}\one_{[a]}(\sigma^ix)=\nu([a])=\frac{1}{N}. 
$$ 
Therefore there exists $M(x)$ such that for all $n>M(x)$ we have 
$$ 
\frac{1-\varepsilon}{N}<\frac{1}{n}\sum_{i=0}^{n-1}\one_{[a]}(\sigma^ix)<\frac{1+\varepsilon}{N} 
$$ 
for all $a\in\{1,2,\dots,N\}$.  Thus there is some $M$ and a set $\mathcal{S}$ of $\nu$-measure at least $1-\varepsilon$ such that these inequalities hold for any $x\in\mathcal{S}$ and any $n>M$.  
Setting $w_n(x)=(x_0,x_1,\dots,x_{n-1})$, we have that 
$$ 
\nu\left(\bigcup_{x\in\mathcal{S}}[w_n(x)]\right)\geq\nu(\mathcal{S})\geq1-\varepsilon. 
$$ 
Since the $\nu$-measure of each word of length $n$ is $1/N^n$, 
it follows that the number of distinct words of length $n$ that can be written as $w_n(x)$ for some $x\in\mathcal{S}$ is at least $(1-\varepsilon)N^n$. 
\end{proof} 

We combine these to derive our key estimate on the statistics in the language: 
\begin{proposition}\label{prop:stats} 
Let $N\in\N$, $\varepsilon>0$, and $0<\alpha<\frac{N-1}{N}$ be fixed.  Then there exists $M\in\N$ and $\lambda=\lambda(N,\varepsilon,\alpha)>1$ such that for any $n>M$,  there is a set of words $w_1,\dots,w_k\in\{1,2,\dots,N\}^n$ satisfying 
$$ 
d_H(w_i,w_j)>\alpha \text{ for all } i\neq j 
$$ 
with $k>\lambda^n$ and for all $a\in\{1,2,\dots,N\}$, 
$$ 
(1-\varepsilon)\frac{n}{N}<|\{i\in[0,n)\colon w_i=a\}|<(1+\varepsilon)\frac{n}{N}. 
$$ 
Moreover, these words can be chosen such that for any $1\leq j_1<j_2\leq k$, no word of length $n$ that occurs as a subword of $w_{j_1}w_{j_1}$ is also a subword of $w_{j_2}w_{j_2}$. 
\end{proposition}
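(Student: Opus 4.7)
The plan is to carry out a Gilbert--Varshamov style greedy construction inside the set of balanced words produced by Lemma~\ref{lem:close-measure}, with one extra bookkeeping step to enforce the cyclic-shift disjointness condition in the final sentence of the proposition.

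First, I would invoke Lemma~\ref{lem:close-measure} to produce, for all $n$ sufficiently large, a pool $\mathcal{W}_n\subseteq\{1,\dots,N\}^n$ of at least $(1-\varepsilon)N^n$ words whose letter frequencies lie in the required window.  Next I would estimate the size of a normalized Hamming ball of radius $\alpha$ in $\{1,\dots,N\}^n$:
$$
V_n(\alpha):=\sum_{k=0}^{\lfloor\alpha n\rfloor}\binom{n}{k}(N-1)^k\le 2^{n\bigl(H_2(\alpha)+\alpha\log_2(N-1)\bigr)},
$$
where $H_2$ denotes the binary entropy.  A calculus check shows that the exponent $H_2(\alpha)+\alpha\log_2(N-1)$ is maximized at $\alpha=(N-1)/N$ with value $\log_2 N$, so under the hypothesis $\alpha<(N-1)/N$ there exists $\theta=\theta(N,\alpha)<1$ with $V_n(\alpha)\le N^{\theta n}$ for every $n$.

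The final clause of the proposition reformulates as saying that the cyclic-shift orbits of the $w_i$ must be pairwise disjoint, since the length-$n$ subwords of $w_iw_i$ are exactly the $n$ cyclic shifts of $w_i$.  With that reformulation in hand I would run the obvious greedy procedure: starting from $\mathcal{W}_n$, at each step pick any remaining word $w$, append it to the output list, and delete from the pool both (i) all words within normalized Hamming distance $\alpha$ of $w$ and (ii) all cyclic shifts of $w$.  Each step removes at most $V_n(\alpha)+n$ words from the pool, so the procedure produces at least
$$
k\ge\frac{(1-\varepsilon)N^n}{V_n(\alpha)+n}\ge\tfrac{1-\varepsilon}{2}\,N^{(1-\theta)n}
$$
words once $n$ is large enough that $n\le V_n(\alpha)$.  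Setting $\lambda:=N^{(1-\theta)/2}>1$ then yields $k>\lambda^n$ for all sufficiently large $n$, and by construction the resulting list is balanced, pairwise $\alpha$-separated in Hamming distance, and has pairwise disjoint cyclic-shift orbits.

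The only nontrivial content is the entropy estimate showing that $V_n(\alpha)$ is exponentially smaller than $N^n$ when $\alpha<(N-1)/N$; once that is established the construction, the counting, and the verification of all three desired properties are straightforward.  I expect the main care to be required when $\alpha$ approaches $(N-1)/N$, since there $1-\theta$ becomes small and the threshold $M$ must be chosen large relative to $\varepsilon$, $N$, and $\alpha$ for the final counting inequality to take effect.
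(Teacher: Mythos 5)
Your proposal is correct and follows essentially the same route as the paper's proof: draw the pool of balanced words from Lemma~\ref{lem:close-measure}, bound the normalized Hamming ball by an entropy estimate whose exponent stays strictly below $\log N$ for $\alpha<(N-1)/N$, pack greedily, and absorb the cyclic-shift (subword of $w_jw_j$) condition by an extra factor of $n$. The only cosmetic differences are that you phrase the packing as an explicit greedy deletion rather than a direct count and write the ball volume via the binary entropy function instead of citing Stirling; if you want strict inequality $d_H(w_i,w_j)>\alpha$ rather than $\ge\alpha$, delete the closed ball or run the argument with a slightly larger radius $\alpha'<(N-1)/N$.
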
 
\begin{proof} 
Let $w\in\{1,2,\dots,N\}^n$ be fixed.  A classical use of Stirling's Formula (see for example~\cite[Equation (1.3)]{Kat}) shows that, since $0<\alpha<\frac{N-1}{N}$, 
\begin{eqnarray*} 
\lim_{n\to\infty}\frac{1}{n}\log|\{u\in\{1,2,\dots,N\}^n\colon d_H(u,w)<\alpha\}| \\ 
&&\hspace{-1 in}=\alpha\log(N-1)-\alpha\log\alpha-(1-\alpha)\log(1-\alpha). 
\end{eqnarray*} 
Set  $f(x)=x\log(N-1)-x\log x-(1-x)\log(1-x)$.  Then 
for $x<(N-1)/N$, the derivative of $f(x)$ is positive and $\lim_{x\to[(N-1)/N]^-}f(x)=\log(N)$.  
Thus $f(\alpha)<\log(N)$, and so there exists $M\in\N$ and $\delta>0$ 
sufficiently small such that $(1+\delta)f(\alpha)<\log(N)$ and such that for all $n>M$, 
$$ 
|\{u\in\{1,2,\dots,N\}^n\colon d_H(u,w)<\alpha\}|<2^{n\cdot(1+\delta)f(\alpha)}.  
$$ 
By Lemma~\ref{lem:close-measure}, if $\mathcal{W}_n$ is the set of all words $w=(w_0,w_1,\dots,w_{n-1})\in\{1,2,\dots,N\}^n$ for which 
$$ 
(1-\varepsilon)\frac{n}{N}<|\{i\in[0,n)\colon w_i=a\}|<(1+\varepsilon)\frac{n}{N}, 
$$ 
then $|\mathcal{W}_n|>(1-\varepsilon)N^n$ for all sufficiently large $n$.  Adjusting the value of $M$ if necessary, we can assume this holds for all $n>M$.  But for each $w\in\mathcal{W}_n$, we have 
$$ 
|\{u\in\mathcal{W}_n\colon d_H(u,w)<\alpha\}|\leq|\{u\in\{1,2,\dots,N\}^n\colon d_H(u,w)<\alpha\}|\leq2^{n\cdot(1+\delta)f(\alpha)}. 
$$ 
This means there is a set of at least 
$$ 
\left\lfloor\frac{(1-\varepsilon)N^n}{2^{n\cdot(1+\delta)f(\alpha)}}\right\rfloor 
$$ 
elements of $\mathcal{W}_n$ that are pairwise at least $\alpha$ separated in the  Hamming distance.  
If $u,v$ are two words in this set and if some word $w$ of length $|u|=|v|$ occurs as a subword of $uu$ and $vv$, then $u$ is itself a subword of $vv$.  Thus there is a subset of size at least 
$$ 
k(n):=\frac{1}{n}\cdot\left\lfloor\frac{(1-\varepsilon)N^n}{2^{n\cdot(1+\delta)f(\alpha)}}\right\rfloor 
$$ 
with the additional property that for any $u,v$ in this list, no word of length $n$ occurs as a subword of both $uu$ and $vv$.  

Since $(1+\delta)f(\alpha)<\log(N)$, it follows that 
$$ 
g:=\lim_{n\to\infty}\frac{\log k(n)}{n}=\log(N)-(1+\delta)f(\alpha)>0.  
$$ 
Thus if $\lambda:=2^{g/2}$, then $\lambda>1$ and $k(n)\geq\lambda^n$ for all $n>M$. 
\end{proof} 

\subsection{Construction of the subshifts} 
\label{subsec:proof-of-main}
The remainder of this section is devoted to proof of Theorem~\ref{thm:slow-entropy}.  

We construct a sequence of subshifts inductively.  Each step of the construction 
involves the construction of two nested subshifts, with the larger one being referred to as the ``noisy'' phase and the smaller one being referred to as the ``quiet'' phase.  At each stage of the construction, 
we appeal to Proposition~\ref{prop:stats}, 
and this necessitates the definition of  two auxiliary sequences.  
Set $\alpha_0=1/3$ and let $(\alpha_i)_{i>0}$ be an increasing sequence of real numbers with $0<\alpha_i<1$ for all $i\in\N$ and such that 
\begin{equation}\label{eq:alpha} 
\prod_{i=1}^{\infty}\alpha_i>\frac{3}{4}. 
\end{equation} 
This sequence provides the parameter $\alpha$ appearing in Proposition~\ref{prop:stats} for each step of the construction.  Next let $\{\varepsilon_i\}_{i\geq0}$ be a decreasing sequence of real numbers with $0<\varepsilon_i<1$ for all $i$ and such that 
\begin{equation}\label{eq:def-of-ep} 
\prod_{i=1}^{\infty}(1-\varepsilon_i)>\frac{99}{100}. 
\end{equation} 
This sequence provides the parameter $\varepsilon$ appearing Proposition~\ref{prop:stats}. 

\subsubsection*{Base Loud Phase}
Let $X_0:=\{1,2\}^{\Z}$ and $M_0=1$.  Thus $X_0$ is a subshift on $N_0:=2$ letters.  
Since $1/3=\alpha_0<\frac{N_0-1}{N_0}$, we can apply Proposition~\ref{prop:stats} with parameters $N=N_0$, $\varepsilon=\varepsilon_0$, and $\alpha=\alpha_0$.  
Thus there exists $\lambda_0>1$ such that for any sufficiently large integer $n$, 
we can find a set of words $w_1(n),w_2(n),\dots,w_{k(n)}(n)\in\{1,2\}^n$ where $k(n)>\lambda_0^n$ and such that for each $a\in\{1,2\}$, the following conditions are satisfied: 
\begin{enumerate}
\item We have the estimate $ (1-\varepsilon_0)\frac{n}{2}<B_a<(1+\varepsilon_0)\frac{n}{2}$ for all $i$, where $B_a$ denotes the number of locations where the letter $a$ occurs in $w_i(n)$;
\item We have the distances separated, meaning that $ d_H(w_i(n),w_j(n))>\alpha_0$ for any $i\neq j$;
\item No word of length $n$ occurs as a subword of both $w_i(n)w_i(n)$ and $w_j(n)w_j(n)$ for some $i\neq j$.  
\end{enumerate}
Let $N_1$ be an integer which is sufficiently large 
that we can choose such a set of words, such that $\lambda_0^{N_1}>4b_{N_1}$, and such that $\alpha_1<\frac{N_1-1}{N_1}$.  Let $k_1:=k(N_1)$ be the number of words constructed in this way and let $w_1,\dots,w_{k_1}\in\{1,2\}^{N_1}$ be the words produced by the construction.  Finally let $L_1\subseteq X_0$ be the subshift of $X_0$ defined by the words  $w_1,w_2,\dots,w_{k_1}$.

\subsubsection*{Base Quiet Phase} Choose an integer $P_1>N_1$ sufficiently large that 
$$ 
|w_1|+|w_2|+\cdots+|w_{k_1}|=k_1N_1<a_{P_1} 
$$ 
and then choose an integer $M_1$ such that $(P_1-1)/(N_1M_1)<\varepsilon_0$.  
For each $1\leq i\leq k_1$, define 
$$ 
v_i:=\underbrace{w_iw_iw_i\cdots w_i}_{M_1\text{ times}}. 
$$ 
Let $X_1\subseteq L_1$ be the subshift defined by the words $v_1,\dots,v_{k_1}$.  
We apply Lemma~\ref{lem:quiet} with parameters $\mathcal{A}=\{1,2\}$, $k=k_1$, $N=N_1$, $M=M_1$, and  
$P=P_1\in[1, NM)$.  If $\CW = \mathcal{W}_{P_1}$ is the set of words of length $P_1$ that occur as subwords of the words $v_1,\dots,v_{k_1}$, then for any ergodic measure $\mu$ supported on $X_1$ we have 
$$ 
\mu\left(\bigcup_{w\in\mathcal{W}}[w]\right)>1-\frac{P_1}{N_1M_1}>1-\varepsilon_0 
$$ 
and if $\widetilde{\mathcal{W}}= \widetilde{\mathcal{W}}_{N_1}$ is the set of words of length $N_1$ that occur as subwords of the words $v_1,\dots,v_{k_1}$, then 
$$ 
\mu\left(\bigcup_{w\in\widetilde{\mathcal{W}}}[w]\right)>1-\frac{N_1}{N_1M_1}>1-\varepsilon_0. 
$$

\subsubsection*{Inductive Loud Phase} Assume we have constructed subshifts 
$$ 
\{1,2\}^{\Z}=:X_0\supseteq L_1\supseteq X_1\supseteq L_2\supseteq X_2\supseteq\cdots\supseteq L_i\supseteq X_i , 
$$ 
a sequence of integers $N_1<P_1<N_2<P_2<\cdots<N_i<P_i$, a sequence of integers $M_1<M_2<\cdots<M_i$, such that that following hold: for each $1\leq j\leq i$, 
	\begin{enumerate}
	\item $N_j$ is sufficiently large such that $\alpha_j<\frac{N_j-1}{N_j}$; \label{c1}
	\item There exists $\lambda_j>1$ and an integer $k_j$ such that $k_j>\lambda_j^{N_j}>4b_{N_j}$; \label{c2} 
	\item There exist words $w_1^j,w_2^j,\dots,w_{k_j}^j\in\{1,2\}^{N_j\cdot\prod_{s=0}^{j-1}N_sM_s}$ such that $L_j$ 
	is comprised of all elements of $X_{j-1}$ defined by the words $w_1^j,\dots,w_{k_j}^j$ and for $i_1\neq i_2$ we have $d_H(w_{i_1}^j,w_{i_2}^j)>\prod_{s=1}^j\alpha_s$ and additionally no word of length $|w_1^j|$ occurs as a subword of both $w_{i_1}^jw_{i_1}^j$ and $w_{i_2}^jw_{i_2}^j$; \label{c3} 
	\item For each $1\leq t\leq k_j$, there is a word 
	$$ 
	v_t^j:=\underbrace{w_t^jw_t^jw_t^j\cdots w_t^j}_{M_j\text{ times}} 
	$$ 
	where for any $t_1\neq t_2$ no subword of length $N_j\cdot\prod_{s=0}^{j-1}N_sM_s$ in $v_{t_1}^j$ is also a subword of $v_{t_2}^j$, and $X_j\subseteq L_j$ is the subshift of $L_j$ defined by the 
	words $v_1^j,v_2^j,\dots,v_{k_j}^j$; \label{c4} 
	\item \label{c5} 
	If $j>1$, then for each $1\leq t\leq k_j$ the word $w_t^j$ can be written as a concatenation of the words $v_1^{j-1},\dots,v_{k_{j-1}}^{j-1}$ and so by identifying this set of $k_{j-1}$ words with the alphabet $\{1,2,\dots,k_{j-1}\}$, 
	we can identify 
	$w_t^j$ with a word of length $N_j$ written in these letters.  With this identification, for each $a\in\{1,2,\dots,k_{j-1}\}$, 
	 we have 
	\begin{align*}
	\frac{N_j}{k_{j-1}}\cdot(1-\varepsilon_{j-1}) & <\bigl|\{i\in[0,n)\colon\text{ the $i^{th}$ letter in }w_t^j\text{ is }a\}\bigr|\\ & <\frac{N_j}{k_{j-1}}\cdot(1+\varepsilon_{j-1}); 
\end{align*}
	\item We have $P_j>k_j$, $M_j>|w_1^i|k_j/\varepsilon_j$, and $(P_j-1)/(N_jM_j)<\varepsilon_j$ and if $\mathcal{W} = \CW_{P_j}$ is the set of all words of length $P_j$ that occur in $X_j$ as subwords of $v_1^j,v_2^j,\dots,v_{k_j}^j$ and if $\mu$ is any ergodic measure supported on $X_j$, then 
	$$ 
	\mu\left(\bigcup_{w\in\mathcal{W}}[w]\right)>(1-\varepsilon_j) 
	$$ 
	and if $\widetilde{\CW} = \widetilde{\CW}_{N_j}$ is the set of all words of length $N_j$ that occur in $X_j$ as subwords of $v_1^j,v_2^j,\dots,v_{k_j}^j$ then 
	$$ 
	\mu\left(\bigcup_{w\in\widetilde{\mathcal{W}}}[w]\right)>(1-\varepsilon_j). 
	$$ \label{c6} 
	\end{enumerate} 
Since $\alpha_i<\frac{N_i-1}{N_i}$, we can apply Proposition~\ref{prop:stats} with parameters $N=k_i$, $\varepsilon=\varepsilon_i$, and $\alpha=\alpha_i$.  Thus 
there exists $\lambda_i>1$ such that for any sufficiently large integer $n$, 
there is a set of words $w_1(n),w_2(n),\dots,w_{k(n)}(n)\in\{1,2,\dots,k_i\}^n$ where $k(n)>\lambda_i^n$ and such that for any $a\in\{1,2,\dots,k_i\}$, the following conditions are satisfied: 
\begin{enumerate}
\item We have the estimate $ (1-\varepsilon_i)\frac{n}{k_i}<B_a<(1+\varepsilon_i)\frac{n}{k_i}$, 
where again $B_a$ denotes 
the number of locations where the letter $a$ occurs in $w_i(n)$;
\item 
We have the distances separated, meaning that $d_H(w_i(n),w_j(n))>\alpha_i$ for any $i\neq j$;
\item No word of length $n$ occurs as a subword of both $w_i(n)w_i(n)$ and $w_j(n)w_j(n)$ for any $i\neq j$. 
\end{enumerate}
Let $N_{i+1}$ be a sufficiently large integer such 
that we can find such a set of words, such that $\lambda_i^{N_{i+1}}>4b_{N_{i+1}}$, and such that $\alpha_{i+1}<\frac{N_{i+1}-1}{N_{i+1}}$.  Let $k_{i+1}:=k(N_{i+1})$ be the number of words constructed in this way.  Finally let 
$$ 
w_1^{i+1},w_2^{i+1},\dots,w_{k_{i+1}}^{i+1}\in\{1,2\}^{N_{i+1}\cdot\prod_{s=0}^iN_sM_s} 
$$ 
be the words constructed by concatenating $w_1^i,\dots,w_{k_i}^i$ according to the letters of the words $w_1(n),\dots,w_{k_{i+1}}(n)$: for $1\leq j\leq k_{i+1}$ if $w_j(n)=a_1a_2\cdots a_{k_i}$ then we define 
$$ 
w_j^{i+1}=w_{a_1}^iw_{a_2}^i\cdots w_{a_{k_i}}^i. 
$$ 
Finally let $L_{i+1}\subseteq X_i$ be the subshift of $X_i$ defined by  
the words $w_1^{i+1},w_2^{i+1},\dots,w_{k_{i+1}}^{i+1}$.

\subsubsection*{Inductive Quiet Phase}
Choose an integer $P_{i+1}>N_{i+1}$ sufficiently large such that 
\begin{equation}\label{eq:def-of-P} 
|w_1^{i+1}|+|w_2^{i+1}|+\cdots+|w_{k_{i+1}}^{i+1}|=k_{i+1}N_{i+1}\cdot\prod_{s=0}^i N_sM_s<a_{P_{i+1}}. 
\end{equation} 
Find an integer $M_{i+1}>|w_1^{i+1}|k_{i+1}/\varepsilon_{i+1}$ such that $(P_{i+1}-1)/(N_{i+1}M_{i+1})<\varepsilon_i$.  For each $1\leq j\leq k_{i+1}$, define 
$$ 
v_j^{i+1}:=\underbrace{w_j^{i+1}w_j^{i+1}w_j^{i+1}\cdots w_j^{i+1}}_{M_{i+1}\text{ times}}. 
$$ 
Let $X_{i+1}\subseteq L_{i+1}$ be the subshift of $L_{i+1}$ 
defined by the words $v_1^{i+1},\dots,v_{k_{i+1}}^{i+1}$.  
As in the base case, for each $1\leq j\leq k_{i+1}$, we choose $w_j^{i+1}=w_{a_1}^iw_{a_2}^i\cdots w_{a_{N_i}}^i$ as 
a way to parse $w_j^{i+1}$ into a concatenation of words with superscript $i$.  Then define 
$$ 
\tilde{v}_j^{i+1}=\underbrace{(a_1a_2\cdots a_{N_i})(a_1a_2\cdots a_{N_i})\cdots(a_1a_2\cdots a_{N_i})}_{M_{i+1}\text{ times}}
$$ 
to be the identification of $v_j^{i+1}$ with a concatenation of letters $\{1,2,\dots,k_i\}$, rather than words $\{w_1^i,w_2^i,\dots,w_{k_i}^i\}$.  Let $\widetilde{X}_{i+1}$ be the subshift of $\{1,2,\dots,k_i\}^{\Z}$ 
defined by the words $\tilde{v}_1^{i+1},\dots,\tilde{v}_{k_{i+1}}^{i+1}$.  
We then apply Lemma~\ref{lem:quiet} with parameters $\mathcal{A}=\{1,2,\dots,k_i\}$, $k=k_{i+1}$, $N=N_{i+1}$, $M=M_{i+1}$, and we choose $P=P_{i+1}\in [1, N_{i+1}M_{i+1})$.  Then 
if $\mathcal{W} = \CW_{P_{i+1}}$ denotes the set of words of length $P_{i+1}$ that occur as subwords of $\tilde{v}_1^{i+1},\tilde{v}_2^{i+1},\dots,\tilde{v}_{k_{i+1}}^{i+1}$ and if $\nu$ is any ergodic measure supported on $\tilde{X}_{i+1}$, 
then 
$$ 
\nu\left(\bigcup_{w\in\mathcal{W}}[w]\right)>1-\frac{P_{i+1}}{N_{i+1}M_{i+1}}>(1-\varepsilon_i). 
$$ 
If $\widetilde{\CW}= \widetilde{\CW}_{N_i}$ is the set of words of length $N_i$ that occur as subwords of $\tilde{v}_1^{i+1},\dots,v_{k_{i+1}}^{i+1}$, 
then 
$$ 
\nu\left(\bigcup_{w\in\widetilde{\mathcal{W}}}[w]\right)>1-\frac{N_{i+1}}{N_{i+1}M_{i+1}}>(1-\varepsilon_i). 
$$ 
Therefore conditions~\eqref{c1}--\eqref{c6} of the induction hypothesis are satisfied for $j=i+1$.

Thus, by induction, we obtain an infinite descending sequence of subshifts 
$$ 
\{1,2\}^{\Z}=:X_0\supseteq L_1\supseteq X_1\supseteq L_2\supseteq X_2\supseteq\cdots\supseteq L_i\supseteq X_i\supseteq\cdots 
$$ 
We define
$$ 
X_{\infty}:=\bigcap_{i=0}^{\infty}X_i. 
$$ 
Since  $\{1,2\}^{\Z}$ is a Baire space (with the usual metric), the intersection of any nested sequence of subshifts is nonempty and so  $X_{\infty}$ is nonempty.  

We now assume that $\mu$ is an ergodic measure supported on $X_{\infty}$ and we study its properties.  

\subsubsection*{Analysis of $P^+(n)$}  Our goal is to show that 
$$ 
K(\alpha,N_i,1/8,\sigma)>b_{N_i} 
$$ 
for all $i$.  Fix $i\in\N$.  First we recall the definition of $K(N_i,1/8,\sigma)$.  For $u\in\mathcal{L}_{N_i}(X_{\infty})$, 
let 
$$ 
B_{1/8}(u)=\{w\in\mathcal{L}_{N_i}(X_{\infty})\colon d_H(u,w)<1/8\} 
$$
be the $(1/8)$-Hamming ball around $u$.  Define  
$$ 
[B_{1/8}(u)]:=\bigcup_{w\in B_{1/8}(u)}[w]. 
$$ 
With this notation, $K(N_i,1/8,\sigma)$ is the smallest cardinality of a set $\mathcal{U}\subseteq\mathcal{L}_{N_i}(X_{\infty})$ such that 
\begin{equation}\label{eq:cyls} 
\mu\left(\bigcup_{u\in\mathcal{U}}[B_{1/8}(u)]\right)>\frac{7}{8}. 
\end{equation} 
Fix such a set $\mathcal{U}\subseteq\mathcal{L}_{N_i}(X_{\infty})$.  
To establish~\eqref{eq1}, we are left with showing that $|\mathcal{U}|>b_{N_i}$.  

Since $\mu$ is an ergodic measure supported on $X_{\infty}$, 
it is also an ergodic measure supported on $X_i$ (albeit not a measure of full support).  The shift 
$X_i$ is constructed by first constructing a set of words $w_1^i,w_2^i,\dots,w_{k_i}^i$ of length $N_i$ and using them to construct words $v_1^i,v_2^i,\dots,v_{k_i}^i$ via the formula 
$$ 
v_j^i=\underbrace{w_j^iw_j^iw_j^i\cdots w_j^i}_{M_i \text{ times}}
$$ 
and $M_i$ is a parameter chosen during the construction.  
Thus the language of $X_i$ is defined to be all elements of $\{1,2\}^{\Z}$ that can be written as bi-infinite concatenations of $v_1^i,v_2^i,\dots,v_{k_i}^i$.  The choice of $M_i$ guarantees, by induction hypothesis~\eqref{c6}, that for any ergodic measure supported on $X_i$ (in particular, for $\mu$) if $\widetilde{\mathcal{W}}$ is the set of words of length $N_i$ that occur as subwords of one of $v_1^i,v_2^i,\dots,v_{k_i}^i$, then 
\begin{equation}\label{eq:good-stats} 
\mu\left(\bigcup_{w\in\widetilde{\mathcal{W}}}[w]\right)>1-\varepsilon_i. 
\end{equation}  
The construction also guarantees, by induction hypothesis~\eqref{c3}, that for $j_1\neq j_2$ we have $d_H(w_{j_1}^i,w_{j_2}^i)>\prod_{s=0}^i\alpha_s\geq\prod_{s=0}^{\infty}\alpha_s>1/4$ (recall that $\alpha_0=1/3$ and equation~\eqref{eq:alpha}).  Now observe that from~\eqref{eq:cyls} and~\eqref{eq:good-stats}, if $\mathcal{V}\subseteq\widetilde{W}$ is the set of all $w\in\widetilde{\mathcal{W}}$ such that there exists $u\in\mathcal{U}$ such that $d_H(u,w)<1/8$, then 
$$ 
\mu\left(\bigcup_{w\in\mathcal{V}}[w]\right)>\frac{7}{8}-\varepsilon_i\geq\frac{3}{4} 
$$ 
provided $i$ is sufficiently large.  Next observe that if $w\in\mathcal{V}$, then $w$ is a word of length $N_i$ that occurs as a subword of one of $v_1^i,v_2^i,\dots,v_{k_i}^i$.  In particular, this means there exists $1\leq j\leq k_i$ such that $w$ is a subword of $w_j^iw_j^i$ (recall that $v_j^i$ is just the concatenation of a large number of copies of $w_j^i$ and $|w_j^i|=N_i$).  We write 
$$ 
[[w_j^i]]=\bigcup_{w\hookrightarrow w_j^iw_j^i}[w] 
$$ 
where $w\hookrightarrow w_j^iw_j^i$ means $w$ is a word of length $N_i$ that occurs as a subword of $w_j^iw_j^i$.  Therefore there exists a smallest integer $1\leq t(w)<N_i$ such that the subword of $w_j^iw_j^i$ of length $N_i$ that starts on the $t(w)^{th}$ letter is $w$; let $s(w)$ denote 
the set whose only element is $t(w)$ (if $w_j^iw_j^i$ is periodic of period smaller than $|w_j^i|$, let $s(w)$ denote the set of starting points of $w$, excluding $|w_j^i|$ if $w=w_j^i$).  Next recall that $X_{\infty}\subseteq L_{i+1}\subseteq X_i$.  The subshift $L_{i+1}$ is defined by the 
words $w_1^{i+1},\dots,w_{k_{i+1}}^{i+1}$ and $L_{i+1}$ is 
defined by the words f $w_1^{i+1},\dots,w_{k_{i+1}}^{i+1}$.  
The words $w_1^{i+1},\dots,w_{k_{i+1}}^{i+1}$ are themselves concatenations of the words $u_1^i,\dots,u_{k_i}^i$ and induction hypothesis~\eqref{c5} guarantees that for any $1\leq j\leq k_i$ and any $1\leq t\leq k_{i+1}$, the relative frequency with which $u_j^i$ appears in the concatenation defining $w_t^{i+1}$ lies 
between $(1-\varepsilon_i)/k_i$ and $(1+\varepsilon_i)/k_i$.  Recall that no word of length $|w_{j_1}^i|$ occurs as a subword of both $w_{j_1}^iw_{j_1}^i$ and $w_{j_2}^iw_{j_2}^i$, for $j_1\neq j_2$, and so words of length $|w_{j_1}^i|$ that occur as subwords of $w_{j_1}^iw_{j_1}^i$ can occur only in $w_{j_1}^iw_{j_1}^i$ or possibly as a subword of $u_{j_2}^iu_{j_3}^i$ for some $j_2\neq j_3$ and in this case the occurrence must partially overlap both of the concatenated words.  Since $\mu$ is ergodic, if $w$ is a subword of some $w_j^iw_j^i$ of length $|w_j^i|$, then 
$$ 
\mu([w])=\frac{1}{2n+1}\sum_{m=-n}^n\one_{[w]}(\sigma^m x) 
$$ 
for $\mu$-almost every $x\in X_{\infty}$.  
Fix some such $x$ and choose some way to parse $x$ into a concatenation of the words $u_1^i,\dots,u_{k_i}^i$.  Let $\mathcal{I}\subseteq\Z$ be the locations where the words in this concatenation begin; this is an arithmetic progression in $\Z$ with gap $N_iM_i$.  The frequency with which a shift of $x$ brings one of the elements of $\mathcal{I}$ within distance $N_i$ of the origin (meaning when the word of length $N_i$ determined by this shift of $x$ is a word that partially overlaps the break between two of the words in our concatenation) is $N_i/N_iM_i=1/M_i$.  Thus we can check: 
\begin{align*}
\frac{(1-2\varepsilon_i)|s(w)|}{k_i|w_1^i|} &\leq\frac{(1-\varepsilon_i)|s(w)|}{k_i|w_1^i|}-\frac{N_i}{N_iM_i}\\
& \leq\lim_{n\to\infty}\frac{1}{2n+1}\sum_{m=-n}^n\one_{[w]}(\sigma^m x)\leq\frac{(1+\varepsilon_i)|s(w)|}{k_i|w_1^i|}+\frac{N_i}{N_iM_i}\\ 
& \leq\frac{(1+2\varepsilon_i)|s(w)|}{k_i|w_1^i|} 
\end{align*} 
since $M_i>|w_1^i|k_i/\varepsilon_i$ by induction hypothesis~\eqref{c6} (recall that $s(w)$ is $1$ unless it counts the number of occurrences of $w$ in the concatenation $w_j^iw_j^i$ where it occurs).
Therefore, 
$$ 
\frac{(1-2\varepsilon_i)|s(w)|}{k_i|w_1^i|}\leq\mu([w_j^i])\leq\frac{(1+2\varepsilon_i)|s(w)|}{k_i|w_1^i|} 
$$ 
for all $1\leq j\leq k_i$. 
Since 
$$ 
\mu\left(\bigcup_{w\in\mathcal{V}}[w]\right)\geq\frac{3}{4} 
$$ 
and since $\mu[w]\leq(1+2\varepsilon_i)/k_i$ for each $w\in\mathcal{V}$, 
it follows that  $|\mathcal{V}|\geq(3k_i)/(4+8\varepsilon_i)$. 

For each $1\leq j\leq k_i$, let $A_j\subseteq\{1,2,\dots,|w_j^i|\}$ be the set 
$$ 
A_j=\{s(w)\colon w\text{ is a word of length $|w_j^i|$ that occurs }w_j^iw_j^i\text{ and is in }\mathcal{V}\}. 
$$ 
Since 
$$ 
\mu\left(\bigcup_{w\in\mathcal{V}}[w]\right)\geq\frac{3}{4} 
$$ 
and since 
$$ 
\frac{(1-2\varepsilon_i)|s(w)|}{k_i|w_1^i|}\leq\mu[w]\leq\frac{(1+2\varepsilon_i)|s(w)|}{k_i|w_1^i|} 
$$ 
for all $w\in\mathcal{V}$, the number of elements of $\mathcal{V}$ is at least 
$$ 
\frac{3}{4}\cdot\frac{k_i|w_1^i|}{(1+2\varepsilon_i)}\geq\frac{k_i|w_1^i|}{2}, 
$$ 
where we count each $w\in\mathcal{V}$ with multiplicity $|s(w)|$.  Therefore for at least half of the integers, 
$1\leq j\leq k_i$ we have 
$$ 
|A_j|\geq\frac{k_i}{2}. 
$$ 
By Lemma~\ref{lem:combinatorics}, there exists some integer $1\leq s\leq k_i$ such that $s\in A_j$ for at least $1/4$ of the integers $1\leq j\leq k_i$.  Let $\mathcal{T}$ be this set of integers.  Then $\mathcal{V}$ contains the subword of length $|w_1^i|$ that occurs in $w_t^iw_t^i$, beginning at location $s$ for all $t\in\mathcal{T}$.  Since $d_H(w_{t_1}^i,w_{t_2}^i)>1/4$ for all $t_1\neq t_2$, 
it follows that the subword of length $w_{t_1}^i$ that occurs in $w_{t_1}^iw_{t_1}^i$ starting at location $s$ is Hamming distance at least $1/4$ from the analogous subword in $w_{t_2}^iw_{t_2}^i$.
It follows from our construction that every element of $\mathcal{V}$ is within Hamming distance $1/8$ of some element of $\mathcal{U}$.  Since two words of Hamming distance $1/4$ cannot be within distance $1/8$ of the same element of $\mathcal{U}$, it follows that $|\mathcal{U}|$ is at least $|\mathcal{T}|\geq k_i/4$.  But by construction, $k_i/4>b_{N_i}$, by induction hypothesis~\eqref{c2}.  Thus $|\mathcal{U}|>b_{N_i}$, and since $\mathcal{U}$ was arbitrary, ~\eqref{eq1} holds.

\subsubsection*{Analysis of $P_\sigma^-(n)$}  Our goal is to show that 
$$ 
K(P_i,\varepsilon_i,\sigma)<a_{P_i}. 
$$ 
Fix $i\in\N$.  Note that if $w\in\mathcal{L}_{P_i}(X_{\infty})$, then $w\in\mathcal{L}_{P_i}(X_{i+1})$.  By induction hypothesis~\eqref{c6} recall that if $\mathcal{W}$ is the set of words in the language of $X_{i+1}$ that occur as subwords of $v_1^{i+1},\dots,v_{k_{i+1}}^{i+1}$ then for any ergodic measure $\mu$ supported on $X_{i+1}$ we have 
$$ 
\mu\left(\bigcup_{w\in\mathcal{W}}[w]\right)>1-\varepsilon_i. 
$$ 
Therefore we can take $\varepsilon_i$-Hamming balls centered on words in $\mathcal{W}$
 as a way to cover a subset of $\mu$-measure at least $1-\varepsilon_i$.  
But by construction, the words $v_1^{i+1},\dots,v_{k_{i+1}}^{i+1}$ are all periodic words of period $|w_1^i|=|w_2^i|=\cdots=|w_{k_i}^i|$ and so 
$$ 
|\mathcal{W}|\leq|w_1^i|+|w_2^i|+\cdots+|w_{k_i}^i|. 
$$ 
By~\eqref{eq:def-of-P} we know that this quantity is at most $a_{P_i}$.  Therefore it is possible to cover a subset $X_{\infty}$ of $\mu$-measure at least $1-\varepsilon_i$ with at most $a_{P_i}$ many $\varepsilon_i$-Hamming balls around words of length $P_i$.  Therefore 
$$ 
K(P_i,\varepsilon_i,\sigma)<a_{P_i} 
$$ 
and so~\eqref{eq2} holds.

Our construction produces a subshift $(X_{\infty},\sigma)$ with the property that for any ergodic measure $\mu$ supported on $\sigma$, we have $P_{\sigma}^-(n)\prec(a_n)$ and $P_{\sigma}^+(n)\succ(b_n)$.  
Since all ergodic measures supported on $X_{\infty}$ satisfy $P_{\sigma}^-(n)\prec(a_n)$ and since $(a_n)$ grows subexponentially, Katok's theorem guarantees that $X_{\infty}$ supports only zero entropy measures.  
Furthermore, the Jewett-Krieger Theorem (see for example, Petersen~\cite{petersen}) guarantees that if $(Y,T,\mu)$ is an ergodic system of (measure theoretic) entropy less than $\log(N)$, then there is a minimal and uniquely ergodic 
system that is measure theoretically isomorphic to  our system.  Taking this model for the system, we have 
the existence of a subshift with all of the desired properties.  

This completes the proof of Theorem~\ref{thm:slow-entropy}. $ \hfill\square$

\end{document}